\documentclass[11pt]{amsart}
\usepackage{latexsym}
\usepackage{palatino}
\usepackage{amsmath, amssymb}
\usepackage[dvipsnames,usenames]{color}
\usepackage[all]{xy}
\newtheorem{dfs}{Definition}[section]
\newtheorem{lms}[dfs]{Lemma}
\newtheorem{thms}[dfs]{Theorem}

\newtheorem{cors}[dfs]{Corollary}
\newtheorem{rems}[dfs]{Remark}

\newtheorem*{thm*}{Theorem}

\addtolength{\textwidth}{3.5cm} \addtolength{\oddsidemargin}{-1.75cm}
\addtolength{\evensidemargin}{-1.75cm}

\title[Ranks of operators in simple C$^*$-algebras]
{Ranks of operators in simple C$^*$-algebras}
%\\(DRAFT, DO NOT DISTRIBUTE)}

\author{Marius Dadarlat}
\address{Department of Mathematics, Purdue University,
150 N.~University St., West Lafayette, IN, 47907-2067, U.S.A.}
\email{mdd@math.purdue.edu}

\author{Andrew S.\  Toms}
\address{Department of Mathematics and Statistics, York University,
4700 Keele St., Toronto, Ontario, Canada, M3J 1P3}
\email{atoms@mathstat.yorku.ca}
\date{\today}
\thanks{M.D. was partially supported by NSF grant \#DMS-0801173; A.T. was partially supported by NSERC}

\begin{document}

\begin{abstract}
Let $A$ be a unital simple separable C$^*$-algebra with strict comparison of positive elements.  We prove that the Cuntz semigroup of $A$ is recovered functorially from the Murray-von Neumann semigroup and the tracial state space $\mathrm{T}(A)$ whenever the extreme boundary of $\mathrm{T}(A)$ is compact and of finite covering dimension.  Combined with a result of Winter, we obtain $\mathcal{Z} \otimes A \cong A$ whenever $A$ moreover has locally finite decomposition rank.  As a corollary, we confirm Elliott's classification conjecture under reasonably general hypotheses which, notably, do not require any inductive limit structure.  These results all stem from our investigation of a basic question:  what are the possible ranks of operators in a unital simple C$^*$-algebra?
\end{abstract}

\maketitle

 \section{Introduction and statement of main results}\label{intro}

 The notion of rank for operators in C$^*$-algebras is of fundamental importance.  Indeed, one may view Murray and von Neumann's type classification of factors as a complete answer to the question of which ranks occur for projections in a factor.  For separable simple C$^*$-algebras the question of which ranks may occur has received rather less attention.  This is unfortunate given the many interesting areas it bears upon:  calculating the Cuntz semigroup, the question of whether or not a C$^*$-algebra is $\mathcal{Z}$-stable, the classification of nuclear simple C$^*$-algebras by $\mathrm{K}$-theory, and the classification of countably generated Hilbert modules, to name a few.  In this article we explore the rank question for a general tracial simple C$^*$-algebra, and give applications in each of these areas.  
 
%In a von Neumann algebra, dimension functions determine the Murray-von Neumann comparability
%of projections, and this is a useful fact when computing which values of dimension functions occur for positive operators.  A sequence of projections in a $\mathrm{II}_1$-factor, for instance, whose ranks form a bounded increasing sequence may be parlayed into a genuinely increasing sequence of projections.  This new sequence converges weakly to a positive operator whose rank is the supremum of the initial sequence of ranks.  No such construction is possible in a general C$^*$-algebra unless one assumes that the Cuntz comparability of positive operators is governed by traces, a property known as strict comparison.  Thus, when asking "Which ranks occur?" for a general C$^*$-algebra, one expects only to be able to prove that a dense subset of the set of reasonable possibilities is realized.  This expectation is borne out in Theorem \ref{main}.  

What do we mean by the rank of an operator $x$ in a unital C$^*$-algebra $A$?  It is reasonable to assume that whatever the definition, $x$ and $x^*x$ should have the same rank.  We therefore consider only positive elements of $A$.  Let  $\tau:A \to \mathbb{C}$ be a tracial state.  For a positive element $a$ of $A$ we define
\[
d_\tau(a) = \lim_{n \to \infty} \tau(a^{1/n}).
\]
This defines the rank of $a$ with respect to $\tau$.  Our notion of rank for $a$ is then the  {\it rank function} of $a$, a map 
$\iota(a)$ from the tracial state space $\mathrm{T}(A)$ to $\mathbb{R}^+$ given by the formula $\iota(a)(\tau) = d_\tau(a)$.  (The maps $d_\tau$ and $\iota$ extend naturally to positive elements in $A \otimes \mathcal{K}$, and we always take this set of positive elements to be the domain of $\iota$.)  These rank functions are lower semicontinuous, affine, and nonnegative;  if $A$ is simple, then they are strictly positive.  We will be working extensively with various subsets of affine functions on $\mathrm{T}(A)$, so let us fix some convenient notation:  $\mathrm{Aff}(K)$ denotes the set of real-valued continuous affine functions on a (typically compact metrizable) Choquet simplex $K$, $\mathrm{LAff}(K)$ denotes the set of  bounded, strictly positive, lower semicontinuous and affine functions on $K$, and $\mathrm{SAff}(K)$ denotes the set of extended real-valued functions which can be obtained as the pointwise supremum of an increasing sequence from $\mathrm{LAff}(K)$.  The range question for ranks of operators in a unital simple tracial C$^*$-algebra $A$ can then be phrased as follows:

\vspace{2mm}
\begin{center}
When is the range of $\iota$ equal to all of $\mathrm{SAff}(\mathrm{T}(A))$?
\end{center}
\vspace{2mm}

%\begin{dfs}\label{affinefunc}
%Let $K$ be a compact metrizable Choquet simplex.  
%\begin{itemize}
%\item[$\bullet$] $\mathrm{Aff}(K)$ denotes the set of real-valued continuous affine functions on $K$.  
%\item[$\bullet$] $\mathrm{LAff}(K)$ denotes the set of  bounded, strictly positive, lower semicontinuous and affine functions on $K$.  
%\item[$\bullet$] $\mathrm{SAff}(K)$ denotes the set of extended real-valued functions which can be obtained as the pointwise supremum of an increasing sequence from $\mathrm{LAff}(K)$.
%\end{itemize}
%\end{dfs}

\noindent
This brings us to our main result.

\begin{thms}\label{main}
Let $A$ be a unital  simple separable infinite-dimensional C$^*$-algebra with nonempty tracial state space $\mathrm{T}(A)$. 
Consider the following three conditions:
\begin{enumerate}
\item[(i)] The extreme boundary of $\mathrm{T}(A)$ is compact and of finite covering dimension.
\item[(ii)] The extreme boundary of $\mathrm{T}(A)$ is compact, and for each $n \in \mathbb{N}$ there is a positive $a \in A \otimes \mathcal{K}$ with the property that
\[
n d_\tau(a_n) \leq 1 \leq (n+1) d_\tau(a), \ \forall \tau \in \mathrm{T}(A).
\]
\item[(iii)] For each positive $b \in A \otimes \mathcal{K}$ such that $d_\tau(b) < \infty, \ \forall \tau \in \mathrm{T}(A)$, and each $\delta >0$, there is a positive $c \in A \otimes \mathcal{K}$ such that 
\[
|2 d_\tau(c) - d_\tau(b)| < \delta, \ \forall \tau \in \mathrm{T}(A).
\]
\end{enumerate}
If (ii) or (iii) is satisfied, then the uniform closure of the range of the map $\iota$ contains $\mathrm{Aff}(\mathrm{T}(A))$.
If any of (i)-(iii) is satisfied and $A$ moreover has strict comparison of positive elements, then the range of $\iota$ is all of $\mathrm{SAff}(\mathrm{T}(A))$.
\end{thms}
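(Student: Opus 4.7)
My plan has three stages, matching the three conditions.

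\emph{Stage 1: uniform approximation of $\mathrm{Aff}(T(A))$ from (ii) or (iii).} Both hypotheses supply small-rank building blocks: (ii) directly gives $a_n$ with $d_\tau(a_n) \in [1/(n+1),\,1/n]$, while (iii) iterated on $1_A$ produces elements of rank approximately $2^{-k}$, and applied to any previously constructed rank function halves it up to $\delta$. Since orthogonal sums in $A \otimes \mathcal{K}$ add rank functions, the set of achievable rank functions is closed under addition and (approximate) positive rational scaling. Given $f \in \mathrm{Aff}(T(A))_+$ and $\varepsilon > 0$, I would discretize the range of $f$ at scale $\varepsilon$, and for each level set $\{f \geq k\varepsilon\}$ I would construct a positive element whose rank function approximates $\varepsilon\,\mathbf{1}_{\{f \geq k\varepsilon\}}$ from below, and take an orthogonal sum of these over $k$. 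The main obstacle is producing these \emph{localized} rank functions — positive elements whose rank function approximates $c\,\mathbf{1}_U$ for a given open $U \subset T(A)$. This is the delicate technical core and likely requires applying spectral functional calculus (cutoffs $(x - t)_+$ for $x \in A_+$) to elements of $A$ whose spectral projections have traces varying sufficiently across $T(A)$, combined with condition (ii)/(iii) to tune the magnitude.

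\emph{Stage 2: exact realization on $\mathrm{SAff}(T(A))$ under strict comparison.} Given $f \in \mathrm{SAff}(T(A))$, write $f = \sup_n g_n$ with $g_n \in \mathrm{Aff}(T(A))_+$ strictly increasing, arranging $g_{n+1} \geq g_n + 2\varepsilon_n$ pointwise for a rapidly decreasing $\varepsilon_n \searrow 0$. By Stage 1 pick $b_n \in (A \otimes \mathcal{K})_+$ with $\|d_\tau(b_n) - g_n\|_\infty < \varepsilon_n$. After passing to cutoffs $c_n := (b_n - \delta_n)_+$ for a small $\delta_n$, we arrange $d_\tau(c_n) \leq g_n \leq d_\tau(b_{n+1})$ for every $\tau$, so strict comparison gives $c_n \preceq b_{n+1}$ in the Cuntz semigroup. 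The resulting increasing Cuntz sequence admits a supremum $a$ in $\mathrm{Cu}(A \otimes \mathcal{K})$ (using order-theoretic completeness of the Cuntz semigroup, Coward--Elliott--Ivanescu), and lower semicontinuity of $d_\tau$ together with the telescoping inequalities yields $d_\tau(a) = \sup_n d_\tau(c_n) = f(\tau)$ for every $\tau$.

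\emph{Stage 3: reduction $\mathrm{(i)} \Rightarrow \mathrm{(ii)}$.} Assuming $\partial_e T(A)$ is compact of covering dimension $d < \infty$, I would choose a fine open cover of $\partial_e T(A)$ of multiplicity at most $d+1$ and small diameter, subordinate to a partition of unity. Affine extension through the Choquet simplex structure turns this into a decomposition $\sum_j \varphi_j \equiv 1$ on $T(A)$ with each $\varphi_j$ supported (in the boundary sense) on a small set. Using simplicity and infinite-dimensionality of $A$, one constructs for each piece a positive element whose rank function is small (of order at most $1/n$) and supported on the corresponding region; the multiplicity bound $d+1$ controls overlap so that their orthogonal sum has total rank in $[1/(n+1),\,1/n]$ as required for (ii). The main obstacle in the full argument is Stage 1's localization step; Stage 3 is essentially a geometric partition-of-unity argument that feeds into it, while Stage 2 is a fairly formal Cuntz-semigroup completion.
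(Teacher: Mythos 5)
Your overall architecture --- uniform approximation of $\mathrm{Aff}(\mathrm{T}(A))$ followed by a Cuntz-semigroup supremum argument under strict comparison to reach $\mathrm{SAff}(\mathrm{T}(A))$ --- matches the paper, and your Stage 2 is essentially the argument of \cite[Theorem 2.5]{bt} that the paper invokes. But there are two genuine problems. First, your Stage 1 localization plan is aimed at the wrong target for condition (iii): rank functions $\tau\mapsto d_\tau(a)$ are \emph{affine} on $\mathrm{T}(A)$, so no positive element can have rank function close to $c\,\mathbf{1}_U$ for a general open $U\subseteq\mathrm{T}(A)$. Localization only makes sense on the extreme boundary, and the construction of such localized elements (Lemmas \ref{almostdelta1} and \ref{cutoff} in the paper) requires $\partial_e\mathrm{T}(A)$ to be compact --- a hypothesis that condition (iii) does not include. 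The paper avoids localization entirely under (iii): by Theorem \ref{hitf} (Cuntz--Pedersen/Lin), every strictly positive $f\in\mathrm{Aff}(\mathrm{T}(A))$ equals $\tau\mapsto\tau(a)$ for some $a\in A_+$, and $\tau(a)$ is within $1/2k$ of $\sum_{i}(1/2k)\,d_\tau((a-i/2k)_+)$; each summand is \emph{already} a rank function of an element of $A$, so condition (iii) need only supply the division by $2k$. Your level-set discretization is the right idea for condition (ii), where compactness of the boundary is assumed and the level sets are taken inside $\partial_e\mathrm{T}(A)$ (this is exactly the proof of Theorem \ref{surjectequiv}), but you have not supplied the localization lemma you correctly identify as the technical core, and it cannot be made to work in the generality of (iii).

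Second, Stage 3 as stated would prove too much, and its key step fails. If (i) implied (ii) with no further hypotheses, the density conclusion would hold under (i) alone, which the theorem pointedly does not claim. The gap is in the patching: over a finite open cover of multiplicity $d+1$, the orthogonal sum of localized small-rank pieces has rank equal to the \emph{sum} of the contributing pieces at each point, so on overlaps it is up to $d+1$ times too large; a partition-of-unity weighting does not repair this, because one cannot scale a rank function by a continuous function of the trace. The paper instead combines the colour classes of the cover one at a time using Lemma \ref{almostmax}, which produces an element whose rank is approximately the \emph{maximum} of two given ranks on the union of their domains, and that lemma uses strict comparison in an essential way (to move one element into the hereditary subalgebra generated by the other). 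Accordingly the paper does not prove $(\mathrm{i})\Rightarrow(\mathrm{ii})$; Theorem \ref{finextreme} shows that (i) \emph{plus strict comparison} yields, for every $r\in(0,1)$ and $\epsilon>0$, an element of rank within $(r-\epsilon,r)$ on all of $\partial_e\mathrm{T}(A)$, which is then fed into the condition-(ii) machinery. You would need either to add strict comparison to your Stage 3 and supply the max-patching lemma, or to find a genuinely new way around the overlap problem.
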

 \noindent
 Some remarks are in order.
 \begin{itemize}
 \item[$\bullet$] Finite-dimensional approximation properties such as nuclearity or exactness are not required by Theorem \ref{main}.
 \item[$\bullet$] In the presence of strict comparison, conditions (i), (ii), and (iii) are equivalent to the range of $\iota$ being all of $\mathrm{SAff}(\mathrm{T}(A))$. 
 \item[$\bullet$] Condition (ii) asks for the existence of positive operators with "almost constant rank".  These, we shall prove, are connected to the existence of unital $*$-homomorphisms from so-called dimension drop algebras in to $A$, and so to the important property of $\mathcal{Z}$-stability.  These operators exist, for example, in any
 crossed product of the form $\mathrm{C}(X) \rtimes G$, where $X$ is an infinite, finite dimensional compact space and at least one element of $G$ acts by a minimal homeomorphism (\cite{TW4}), and in many other C$^*$-algebras whose finer structure is presently out of reach. 
 \item[$\bullet$] 
 Condition (iii) is automatically satisfied in $\mathcal{Z}$-stable C$^*$-algebras (\cite{PT}) and in unital simple ASH algebras with slow dimension growth \cite{To101}.
 \item[$\bullet$] It is possible that $\iota( (A \otimes \mathcal{K})_+) = \mathrm{SAff}(\mathrm{T}(A))$ for any unital simple separable infinite-dimensional C$^*$-algebra with nonempty tracial state space.  Conditions (ii) and (iii) guarantee that the range of $\iota$ is dense in the sense that any element of $\mathrm{SAff}(\mathrm{T}(A))$ is a supremum of an increasing sequence from $\iota( (A \otimes \mathcal{K})_+)$.  In the absence of strict comparison, however, the elements of $(A \otimes \mathcal{K})_+$ giving rise to this sequence are not themselves increasing for the Cuntz relation, and so one cannot readily find a positive $a \in A \otimes \mathcal{K}$ 
for which $\iota(a)$ is the supremum of the said sequence.   
 \end{itemize}

 Theorem \ref{main} has several consequences for the algebras it covers.  We describe them briefly here, and in detail in Section \ref{apps}.  If $A$ as in Theorem \ref{main} satisfies any of (i)-(iii) and has strict comparison, then the Cuntz semigroup $\mathrm{Cu}(A)$ is recovered functorially from the Murray-von Neumann semigroup of $A$ and $\mathrm{T}(A)$, leading to the confirmation of two conjectures of Blackadar-Handelman concerning dimension functions (\cite{BPT}, \cite{bt}).  In fact, $\mathrm{Cu}(A) \cong \mathrm{Cu}(A \otimes \mathcal{Z})$, and so if $A$ moreover has locally finite decomposition rank---a mild condition satisfied by unital separable ASH algebras, for instance---then $A \cong A \otimes \mathcal{Z}$ by a result of Winter (\cite{NW}, \cite{Wi4}).  This leads to the complete classification of countably generated Hilbert modules over $A$ up to isomorphism via $\mathrm{K}_0(A)$ and $\mathrm{T}(A)$ in a manner analogous to the classification of W$^*$-modules over a $\mathrm{II}_1$ factor (\cite{bt}).  Finally, if $\mathcal{C}$ is the class of all such $A$ which, additionally, have projections separating traces, then $\mathcal{C}$ satisfies Elliott's classification conjecture:  the members of $\mathcal{C}$ are determined up to $*$-isomorphism by their graded ordered $\mathrm{K}$-theory (\cite{Wi2}, \cite{Wi3}, \cite{lin-niu}).

The paper is organized as follows:  Section \ref{prelim} reviews the Cuntz semigroup and dimension functions;  Section \ref{ipq} develops a criterion for embedding dimension drop algebras in C$^*$-algebras with strict comparison;  Section \ref{rankfunc} develops several techniques for constructing positive elements with prescribed rank function;  Section \ref{mainproof} contains the proof of Theorem \ref{main};  Section \ref{apps} details our applications.

 \section{Preliminaries}\label{prelim}

 Let $A$ be a C$^*$-algebra and let $\mathcal{K}$ denote the algebra of compact operators on a separable infinite-dimensional Hilbert space.  Let $(A \otimes \mathcal{K})_+$ denote the set of positive elements in $A \otimes \mathcal{K}$.  Given $a,b \in (A \otimes \mathcal{K})_+$,
we write $a \precsim b$ if there is a sequence $(v_n)$ in $A \otimes \mathcal{K}$ such that
\[
\| v_n b v_n^*-a\| \to 0.
\]
We then say
that $a$ is {\it Cuntz subequivalent} to $b$ (this relation restricts to usual Murray-von Neumann comparison on projections).  We write $a \sim b$ if $a \precsim b$ and $b \precsim a$, and say that $a$ is {\it Cuntz equivalent} to $b$.  Set $\mathrm{Cu}(A) = (A \otimes \mathcal{K})_+/\sim$, and write $\langle a \rangle$ for the equivalence class of $a$.
We equip $\mathrm{Cu}(A)$ with the binary operation
\[
\langle a \rangle + \langle b \rangle = \langle a \oplus b \rangle
\]
(using an isomorphism between $\mathrm{M}_2(\mathcal{K})$ and $\mathcal{K}$) and the partial order
\[
\langle a \rangle \leq \langle b \rangle \Leftrightarrow a \precsim b.
\]
This ordered Abelian semigroup is the {\it Cuntz semigroup} of $A$.

 It was shown in \cite{cei} that increasing sequences in $\mathrm{Cu}(A)$ always have a supremum, and we shall use this fact freely in the sequel.  Suppose now that $A$ is unital and $\tau:A \to \mathbb{C}$ is a tracial state.  The function $d_\tau$ introduced in Section \ref{intro} is constant on Cuntz equivalence classes, and drops to an additive order-preserving map on those classes in $\mathrm{Cu}(A)$ coming from positive elements in matrices over $A$.  This map has a unique supremum- and order-preserving extension to all of $\mathrm{Cu}(A)$, and we also denote this extension by $d_\tau$ (see \cite[Lemma 2.3]{bt}).
\begin{dfs}\label{strictcomp}
Let $A$ be a unital C$^*$-algebra.  We say that $A$ has strict comparison of positive elements (or simply strict comparison) if $a \precsim b$ for $a,b \in (A \otimes \mathcal{K})_+$ whenever
\[
d_\tau(a) < d_\tau(b), \, \forall\,\tau \in \{ \gamma \in \mathrm{T}(A) \ | \ d_\gamma(b) < \infty \}.
\]
\end{dfs}

 For positive elements $a,b$ in a C*-algebra $A$ we write that $a\approx b$ if here is $x\in A$ such that $x^*x=a$ and $xx^*=b$. The relation $\approx$ is
an equivalence relation \cite{Ped:factor}, sometimes referred to as {\it Cuntz-Pedersen equivalence}.
It is known that $a \approx b$ implies $a \sim b$.

If $a\in A $ is a positive element and $\tau\in \mathrm{T}(A)$ we denote by
$\mu_\tau$  the measure induced on the spectrum $\sigma(a)$ of $a$ by $\tau$.
Then $d_\tau(a)=\mu_\tau((0,\infty)\cap\sigma(a))$ and more generally
$$d_\tau(f(a))=\mu_\tau(\{t\in\sigma(a):\, f(t)>0\})$$
for all nonnegative continuous functions $f$ defined of $\sigma(a)$.

 \section{A criterion for embedding quotients of dimension drop algebras}\label{ipq}

The Jiang-Su algebra $\mathcal{Z}$ is an important object in the structure theory of separable nuclear C$^*$-algebras.  One wants to know when a given algebra $A$ has the property that $A \otimes \mathcal{Z} \cong A$, as one can then frequently obtain detailed information about $A$ through its $\mathrm{K}$-theory and positive tracial functionals.  If $A$ is separable, then $A \otimes \mathcal{Z} \cong A$ whenever one can find, for any natural number $m>1$, an approximately central sequence $(\phi_n)$ of unital $*$-homomorphisms from the prime dimension drop algebra
\[
\mathrm{I}_{m,m+1} := \{ f \in \mathrm{C}([0,1]; \mathrm{M}_m \otimes \mathrm{M}_{m+1}) \ | \ 
f(0) \in \mathbf{1}_m \otimes \mathrm{M}_{m+1}, \ f(1) \in \mathrm{M}_m \otimes \mathbf{1}_{m+1}\}
\]
into $A$.  It is therefore of interest to characterize when $A$ admits a unital $*$-homomorphism $\phi:\mathrm{I}_{m,m+1} \to A$.  In this section we obtain such a characterization in the case that $A$ has strict comparison (Theorem \ref{embedcriterion}).  Our result should be compared with \cite[Proposition 5.1]{RorWin:JiangSu}, which uses the assumption of stable rank one instead of strict comparison.  Some of the results we develop here will be employed later to construct positive elements with specified rank functions.
 
%%%%%%%%%
\begin{lms}\label{sup1}
Let $A$ be a unital C$^*$-algebra with $\mathrm{T}(A)\neq \emptyset$, and let $a \in M_k(A)$ be positive.  Suppose that there are $0<\alpha <\beta $ such that
$\alpha<d_\tau(a) < \beta $ for every $\tau$ in a closed subset $X$ of $\mathrm{T}(A)$.  Then there exist $\epsilon >0$ and an open neigbourhood $U$ of $X$ with the property that for
$
\alpha<d_\tau((a-\epsilon)_+) <\beta , \ \forall \tau \in U.
$
\end{lms}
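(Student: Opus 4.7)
The plan is to use the identity $d_\tau(f(a))=\mu_\tau(\{t\in\sigma(a):f(t)>0\})$, the fact that $\tau\mapsto d_\tau((a-\epsilon)_+)$ is lower semicontinuous (being the supremum of the continuous functions $\tau\mapsto \tau(((a-\epsilon)_+)^{1/n})$), and compactness of $X$ (which follows from $X$ being closed in the weak-$*$ compact space $\mathrm{T}(A)$) to extend bounds that hold pointwise on $X$ to bounds that hold on an open neighborhood of $X$. The key subtlety is asymmetric: a strict lower bound on a lower-semicontinuous function automatically defines an open set, whereas a strict upper bound does not, so the upper-bound half of the conclusion requires a detour through a continuous majorant of $d_\tau((a-\epsilon)_+)$.

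For the lower bound, observe that $d_\tau((a-\epsilon')_+)\nearrow d_\tau(a)$ as $\epsilon'\to 0^+$ for each fixed $\tau$. Given $\tau_0\in X$, pick $\epsilon(\tau_0)>0$ with $d_{\tau_0}((a-\epsilon(\tau_0))_+)>\alpha$. By lower semicontinuity, $V(\tau_0):=\{\tau\in \mathrm{T}(A):d_\tau((a-\epsilon(\tau_0))_+)>\alpha\}$ is open and contains $\tau_0$. Compactness of $X$ extracts a finite subcover $V(\tau_1),\ldots,V(\tau_n)$. Set $\epsilon^*:=\min_i \epsilon(\tau_i)$ and $V:=\bigcup_i V(\tau_i)$. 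Since $\epsilon'\mapsto d_\tau((a-\epsilon')_+)$ is nonincreasing, $d_\tau((a-\epsilon^*)_+)\ge d_\tau((a-\epsilon(\tau_i))_+)>\alpha$ whenever $\tau\in V(\tau_i)$, and hence on all of $V$.

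For the upper bound, fix any $\epsilon>0$ and let $g_\epsilon$ be a continuous function with $g_\epsilon=0$ on $[0,\epsilon/2]$ and $g_\epsilon=1$ on $[\epsilon,\infty)$, interpolating linearly in between. Then $\chi_{(\epsilon,\infty)}\le g_\epsilon\le \chi_{(0,\infty)}$ pointwise, so $d_\tau((a-\epsilon)_+)\le \tau(g_\epsilon(a))\le d_\tau(a)<\beta$ for every $\tau\in X$. Since $\tau\mapsto \tau(g_\epsilon(a))$ is continuous on $\mathrm{T}(A)$ and $X$ is compact, the supremum of $\tau(g_\epsilon(a))$ over $X$ is attained, and is therefore strictly less than $\beta$; continuity then produces an open neighborhood $U_1\supseteq X$ on which $\tau(g_\epsilon(a))<\beta$, and consequently $d_\tau((a-\epsilon)_+)<\beta$. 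Crucially, this argument is valid for every choice of $\epsilon>0$.

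To conclude, take $\epsilon:=\epsilon^*$ from the lower-bound step, run the upper-bound step with this value of $\epsilon$, and set $U:=U_1\cap V$. I expect the main technical point to be precisely the asymmetry noted above: the strict upper bound $d_\tau(a)<\beta$ cannot be propagated to a neighborhood purely through the lower-semicontinuous framework, forcing the intermediate use of a continuous function of $\tau$ that dominates $d_\tau((a-\epsilon)_+)$.
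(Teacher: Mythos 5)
Your proof is correct and follows essentially the same route as the paper's: the lower bound is handled identically (fix $\epsilon_\tau$ pointwise, use lower semicontinuity of $\gamma\mapsto d_\gamma((a-\epsilon_\tau)_+)$ and compactness of $X$, take the minimum of the $\epsilon_{\tau_i}$ over a finite subcover), and the upper bound by dominating $d_\tau((a-\epsilon)_+)$ by a quantity that is still $<\beta$ on $X$ and behaves well under weak-$*$ limits. The only difference is cosmetic: the paper invokes the Portmanteau theorem to get upper semicontinuity of $\gamma \mapsto \mu_\gamma([\epsilon,\infty)\cap\sigma(a))$, whereas you interpose the continuous function $\gamma\mapsto\gamma(g_\epsilon(a))$ with $\chi_{(\epsilon,\infty)}\le g_\epsilon\le\chi_{(0,\infty)}$, which amounts to proving the needed instance of Portmanteau by hand.
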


\begin{proof}
  Since $d_\tau((a-\epsilon)_+) \nearrow d_\tau(a)$ as
$\epsilon \searrow 0$ for each $\tau$, we can fix $\epsilon_\tau > 0$ such that $d_\tau((a-\epsilon_\tau)_+) > \alpha$;  since
$\gamma \mapsto d_\gamma((a-\epsilon_\tau)_+)$ is lower semicontinuous, we can find an open neighbourhood $V_\tau$ of
$\tau$ such that
\[
d_\gamma((a-\epsilon_\tau)_+) > \alpha, \ \forall \gamma \in V_\tau.
\]
The family $\{V_\tau \}_{\tau \in X}$ is an open cover of $X$, and so
\mbox{$X \subset V_{\tau_1} \cup \cdots \cup V_{\tau_n}$} for some
$\tau_1,\ldots,\tau_n \in X$.
  Set $\epsilon := \min \{\epsilon_{\tau_1},\ldots,\epsilon_{\tau_n}\}$ and
  $V:=V_{\tau_1}\cup \cdots \cup V_{\tau_n}$, so that
for each $\tau \in V$, $\tau \in V_{\tau_i}$ for some $i$, and we have
\[
d_\tau((a-\epsilon)_+) \geq d_{\tau}((a-\epsilon_{\tau_i})_+) > \alpha.
\]

Let $\mu_\tau$ be the measure induced on $\sigma(a)$ by $\tau$.
We also have
\[
d_\tau((a-\epsilon)_+) = \mu_\tau((\epsilon,\infty) \cap \sigma(a)) \leq \mu_\tau([\epsilon,\infty) \cap \sigma(a)) \leq
\mu_\tau((0,\infty) \cap \sigma(a)) \leq d_\tau(a), \ \forall \tau \in \mathrm{T}(A).
\]
In particular, $d_\tau((a-\epsilon)_+) \leq \mu_\tau([\epsilon,\infty) \cap \sigma(a)) < \beta$ for all $\tau \in X$.  By the Portmanteau theorem, the map $\gamma \mapsto \mu_\tau([\epsilon,\infty)
\cap \sigma(a))$ is upper semicontinuous, and so  the set
\[W=\{\gamma\in \mathrm{T}(A):  \mu_\gamma([\epsilon,\infty) \cap \sigma(a)) < \beta\}\]
is open and contains $X$.  Moreover, for any $\gamma \in W$, 
  we have
\[
d_\gamma((a-\epsilon)_+)  <\beta.
\]

We conclude the proof by setting  $U = V \cap W$.
\end{proof}
%%%%%%%%%
For each $\eta>0$ define a continuous map $f_\eta:\mathbb{R}_{+} \to [0,1]$ by the following formula:
\begin{equation}
 \label{feta}
f_\eta(t) = \left\{ \begin{array}{ll}  t/\eta, & 0 < t < \eta \\ 1, & t \geq \eta. \end{array} \right.
\end{equation}
 \begin{lms}\label{almostconstant}
 Let $A$ be a unital separable C$^*$-algebra with nonempty tracial state space, and let $X \subseteq \mathrm{T}(A)$ be closed.   Suppose
 that $a \in \mathrm{M}_k(A)$ is a positive element with the property that
 \[
 \beta - \alpha < d_\tau(a) \leq \beta, \ \forall \tau \in X
 \]
 for some $0 < \alpha < \beta$.
 Then there is $\eta > 0$ such that
 %\[
% \mu_\tau\left( (0,\eta] \cap \sigma(a) \right) < \alpha, \ \forall \tau \in X
% \]
% (here $\mu_\tau$ is the measure induced on $\sigma(a)$ by $\tau$). It follows that
% we have
\[ k-\beta \leq d_\tau(1-f_\eta(a))<k-\beta+2\alpha, \ \forall \tau \in X.
\]
 \end{lms}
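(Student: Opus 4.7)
The plan is to rewrite $d_\tau(1-f_\eta(a))$ as a spectral-measure quantity so that both required inequalities reduce to a uniform lower bound on $d_\tau((a-\eta)_+)$ for $\tau\in X$, which I will then obtain by the compactness argument already used in Lemma \ref{sup1}. Extending $\tau$ canonically to $\mathrm{M}_k(A)$ and letting $\mu_\tau$ denote the induced measure on $\sigma(a)$, one has $\mu_\tau(\sigma(a))=\tau(1_{\mathrm{M}_k(A)})=k$. Applying the identity $d_\tau(g(a))=\mu_\tau(\{t\in\sigma(a):g(t)>0\})$ recalled at the end of Section~\ref{prelim} to $g(t)=1-f_\eta(t)$, which is strictly positive exactly on $[0,\eta)$, one obtains
\[
d_\tau(1-f_\eta(a))=\mu_\tau\bigl([0,\eta)\cap\sigma(a)\bigr)=k-\mu_\tau\bigl([\eta,\infty)\cap\sigma(a)\bigr).
\]
The lower bound is then immediate for any $\eta>0$, since $\mu_\tau([\eta,\infty)\cap\sigma(a))\leq\mu_\tau((0,\infty)\cap\sigma(a))=d_\tau(a)\leq\beta$ gives $d_\tau(1-f_\eta(a))\geq k-\beta$.

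For the upper bound, the inequality $\mu_\tau([\eta,\infty)\cap\sigma(a))\geq\mu_\tau((\eta,\infty)\cap\sigma(a))=d_\tau((a-\eta)_+)$ reduces the task to producing a single $\eta>0$ with
\[
d_\tau((a-\eta)_+)>\beta-2\alpha\quad\text{for every }\tau\in X,
\]
since then $d_\tau(1-f_\eta(a))\leq k-d_\tau((a-\eta)_+)<k-\beta+2\alpha$. To find such $\eta$ I will mimic the compactness argument of Lemma \ref{sup1}. Since $d_\tau(a)>\beta-\alpha>\beta-2\alpha$ and $d_\tau((a-\eta)_+)\nearrow d_\tau(a)$ as $\eta\searrow 0$, for each $\tau\in X$ there is $\eta_\tau>0$ with $d_\tau((a-\eta_\tau)_+)>\beta-2\alpha$; the lower semicontinuity of $\gamma\mapsto d_\gamma((a-\eta_\tau)_+)$ then produces an open neighborhood $V_\tau\ni\tau$ on which the same strict inequality persists. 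Because $X$ is a closed subset of the compact Choquet simplex $\mathrm{T}(A)$, hence compact, one extracts a finite subcover $V_{\tau_1},\dots,V_{\tau_n}$ of $X$ and sets $\eta:=\min_i\eta_{\tau_i}$; the monotonicity $d_\gamma((a-\eta)_+)\geq d_\gamma((a-\eta_{\tau_i})_+)$ for $\eta\leq\eta_{\tau_i}$ then yields $d_\tau((a-\eta)_+)>\beta-2\alpha$ for all $\tau\in X$.

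I expect no real obstacle beyond careful bookkeeping with the spectral measures. The one substantive observation is that the unit of $\mathrm{M}_k(A)$ forces $\mu_\tau$ to have total mass $k$, so that the quantity $k-\beta$ in the conclusion encodes the ``atom'' $\mu_\tau(\{0\}\cap\sigma(a))=k-d_\tau(a)$; once this is recognized, both inequalities become bounds on the mass of $\mu_\tau$ near zero, and the uniform control in $\tau$ is delivered by exactly the lower semicontinuity plus compactness scheme used earlier in Lemma \ref{sup1}.
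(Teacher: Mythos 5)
Your proof is correct and takes essentially the same approach as the paper's: a uniform $\eta$ with $d_\tau((a-\eta)_+)$ bounded below on all of $X$, obtained from lower semicontinuity plus compactness (this is exactly the content of Lemma \ref{sup1}, which the paper simply cites rather than re-running), followed by spectral-measure bookkeeping using the total mass $\mu_\tau(\sigma(a))=k$. The only cosmetic difference is that the paper applies Lemma \ref{sup1} at the threshold $\beta-\alpha$ and splits $\mu_\tau([0,\eta))$ into the atom at $0$ plus the mass of $(0,\eta)$, each contributing one $\alpha$, whereas you pass to the complement $[\eta,\infty)$ and aim directly at the threshold $\beta-2\alpha$; both arguments are sound.
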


 \begin{proof} Let $\mu_\tau$ be the measure induced on $\sigma(a)$ by $\tau$.
 By Lemma ~\ref{sup1} there is $\eta>0$ such that $d_\tau ((a-\eta)_+)>\beta-\alpha$ for all $\tau \in X$. Therefore
 \[
d_\tau((a-\eta)_+)= \mu_\tau\left( (\eta,\infty)\cap \sigma(a) \right) >\beta-\alpha, \ \forall \tau \in X.
 \]
 It follows that
 \[\mu_\tau((0,\eta])=d_\tau(a)-\mu_\tau((\eta,\infty)\cap \sigma(a))<\beta-(\beta-\alpha)=
 \alpha,   \ \forall \tau \in X.\]
 Then, $d_\tau(1-f_\eta(a))=\mu_\tau ([0,\eta)\cap \sigma(a))=\mu_\tau ((0,\eta)\cap \sigma(a))+\mu_\tau (\{0\}\cap \sigma(a))$ and hence
\[d_\tau(1-f_\eta(a))=\mu_\tau ((0,\eta)\cap \sigma(a))+k-d_\tau(a)<\alpha+k-(\beta-\alpha)
=k-\beta+2\alpha, \ \ \forall \tau \in X.\]
Moreover, $d_\tau(1-f_\eta(a))\geq k-d_\tau(a) \geq k-\beta$.
 \end{proof}

 \begin{lms}
  \label{PedCuntz_CC}
  Let $a,b$ be positive elements in a C*-algebra $A$. If $a\approx b$ then $f(a)\approx f(b)$
  for any continuous function $f:[0,\infty)\to [0,\infty)$ with $f(0)=0$.
 \end{lms}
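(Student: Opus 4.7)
The plan is to produce $y \in A$ with $y^*y = f(a)$ and $yy^* = f(b)$ by approximation. Heuristically, if $x = v a^{1/2}$ were the polar decomposition of $x$ in $A^{**}$, then the natural candidate would be $y = v f(a)^{1/2}$, which generally does not lie in $A$; we approach it by elements of $A$ of the form $x g_n(a)$.

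First, I would choose continuous functions $g_n : [0,\infty) \to [0,\infty)$ with $g_n(0) = 0$ such that $t^{1/2} g_n(t) \to f(t)^{1/2}$ uniformly on bounded intervals. A concrete choice is $g_n(t) = f(t)^{1/2} / \max(t^{1/2}, 1/n)$: it is continuous and vanishes at $0$ since $f(0) = 0$, it satisfies $t^{1/2} g_n(t) = f(t)^{1/2}$ on $\{t \geq 1/n^2\}$, and both $t^{1/2} g_n(t)$ and $f(t)^{1/2}$ are bounded on the remaining set by $\sup_{s \leq 1/n^2} f(s)^{1/2}$, which tends to $0$.

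Set $y_n := x g_n(a) \in A$. The identity $y_n^* y_n = g_n(a) a g_n(a) = (a^{1/2} g_n(a))^2$ yields $y_n^* y_n \to f(a)$ in norm, and the estimate $\|y_n - y_m\|^2 = \|a^{1/2}(g_n(a) - g_m(a))\|^2 \to 0$ shows that $(y_n)$ is Cauchy, so $y_n \to y$ for some $y \in A$ with $y^* y = f(a)$. For the other product I would invoke the intertwining identity $x\, h(x^*x) = h(xx^*)\, x$, valid for every continuous $h$ with $h(0) = 0$; it follows from $x(x^*x)^k = (xx^*)^k x$ by approximating $h$ uniformly on the compact set $\sigma(x^*x) \cup \sigma(xx^*)$ by polynomials without constant term. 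Applied with $h = g_n$, this gives
\[
y_n y_n^* \;=\; x g_n(a)^2 x^* \;=\; g_n(b)\, xx^*\, g_n(b) \;=\; g_n(b) b g_n(b) \;=\; (b^{1/2} g_n(b))^2,
\]
which converges to $f(b)$ by the same uniform approximation applied on $\sigma(b)$. Hence $yy^* = f(b)$, and $f(a) \approx f(b)$ via $y$.

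The only real subtlety is the simultaneous requirement that $g_n$ vanish at $0$ and that $t^{1/2} g_n(t) \to f(t)^{1/2}$ uniformly up to $t = 0$; once this is arranged, the rest is routine continuous functional calculus combined with the polynomial intertwining identity.
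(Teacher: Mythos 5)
Your argument is correct, and it reaches the same element as the paper's proof by a more elementary route. The paper takes the polar decomposition $x=v|x|$ in $A^{**}$, cites the Kirchberg--R{\o}rdam isomorphism $d\mapsto vdv^*$ from $\overline{aAa}$ onto $\overline{bAb}$ to get $vf(a)v^*=f(b)$ in one stroke, and then verifies that $y:=vf(a)^{1/2}$ lies in $A$ via $vf(a)^{1/2}\in\overline{va^{1/2}A}=\overline{xA}$. You construct the same $y$ (since $xg_n(a)=va^{1/2}g_n(a)\to vf(a)^{1/2}$) without ever leaving $A$: membership in $A$ comes from the Cauchy estimate $\|y_n-y_m\|=\|a^{1/2}(g_n(a)-g_m(a))\|$, and the computation of $yy^*$ is replaced by the polynomial intertwining identity $xh(x^*x)=h(xx^*)x$ for $h(0)=0$. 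Your choice $g_n(t)=f(t)^{1/2}/\max(t^{1/2},1/n)$ does what you claim: it vanishes at $0$, agrees with $f(t)^{1/2}t^{-1/2}$ for $t\ge 1/n^2$, and on $[0,1/n^2)$ one has $0\le t^{1/2}g_n(t)\le f(t)^{1/2}\le\sup_{s\le 1/n^2}f(s)^{1/2}\to 0$, giving the needed uniform convergence on $\sigma(a)\cup\sigma(b)$. What your approach buys is self-containedness (no bidual, no external lemma); what the paper's buys is brevity, since the cited lemma packages both the intertwining and the hereditary-subalgebra bookkeeping. Either proof is acceptable.
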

\begin{proof}
 By assumption there is $x\in A$ such that $x^*x=a$ and $xx^*=b$. Let $x=v|x|$ be the polar decomposition
 of $x$ where $v$ is a partial isometry in the enveloping von Neumann algebra $A^{**}$.
 Then as in \cite[Lemma 2.4]{KR1} the map $d\mapsto vdv^*$ defines a isomorphism from $\overline{aAa}$ to
 $\overline{bAb}$ which maps $a$ to $b$ and hence $f(a)$ to $f(b)$. Therefore $vf(a)v^*=f(b)$.
 Let us note that $y:=vf(a)^{1/2}$ is an element of $A$.
 Indeed, since $f(0)=0$, $vf(a)^{1/2}\in \overline{va^{1/2}A}=\overline{v|x|A}=\overline{xA}$.
 It follows that
\[
 f(b) = v f(a) v^* =yy^*\approx y^*y=f(a)^{1/2} v^*v f(a)^{1/2} = f(a).
\]
as required.
\end{proof}

 \begin{lms}\label{orthoelements}
 Let $A$ be a unital separable C$^*$-algebra with strict comparison of positive elements.  Also suppose that for each $m \in \mathbb{N}$, there is
 $x \in \mathrm{Cu}(A)$ such that
 \[
 m d_\tau(x) \leq 1 \leq (m+1) d_\tau(x), \ \forall \tau \in \mathrm{T}(A)\neq \emptyset.
 \]
 It follows that for each
 $n \in \mathbb{N}$ and for any $0<\epsilon <1/n$, there exist mutually orthogonal positive elements $a_1,\ldots,a_n \in A$ with the following properties:
 \begin{enumerate}
 \item[(i)] $a_i \approx a_j$ for all $i,j \in \{1,\ldots,n\}$;
 \item[(ii)] $1/n - \epsilon < d_\tau(a_i) < 1/n$ for each $\tau \in \mathrm{T}(A)$ and $i \in \{1,\ldots,n\}$.
 \end{enumerate}
 \end{lms}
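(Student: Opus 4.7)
The plan is three-staged. First, I produce a positive $e\in A\otimes\mathcal{K}$ whose rank is just below $1/n$ at every trace. Second, I form $b=e_1+\cdots+e_n$ from $n$ pairwise orthogonal copies of $e$ in $A\otimes\mathcal{K}$ and use strict comparison to realize $(b-\delta)_+$ as an element Cuntz--Pedersen equivalent to some $c\in A$. Third, I transport the orthogonal decomposition of $(b-\delta)_+$ through the isomorphism of hereditary subalgebras induced by this equivalence.

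For the first stage, a routine arithmetic argument using the hypothesis with $m':=mn+1$ for $m$ large produces such an $e$: the hypothesis supplies $x\in\mathrm{Cu}(A)$ with $1/(mn+2)\le d_\tau(x)\le 1/(mn+1)$, and taking $m$ pairwise orthogonal copies of a positive representative of $x$ in disjoint matrix blocks yields
\[
\frac{1}{n+2/m}\le d_\tau(e)\le\frac{1}{n+1/m}<\frac{1}{n},
\]
forcing $d_\tau(e)\in(1/n-\epsilon/2,\,1/n)$ for $m$ large enough. For the second stage, $d_\tau(b)=n\,d_\tau(e)<1=d_\tau(1_A)$ for every $\tau$, so strict comparison gives $b\precsim 1_A$. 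R\o{}rdam's lemma then furnishes, for any sufficiently small $\delta>0$, an element $r\in A\otimes\mathcal{K}$ with $(b-\delta)_+=r(1_A\otimes e_{11})r^*$. Setting $s:=r(1_A\otimes e_{11})$ yields $ss^*=(b-\delta)_+$, while $c:=s^*s$ lives in $(1_A\otimes e_{11})(A\otimes\mathcal{K})(1_A\otimes e_{11})\cong A$; thus $(b-\delta)_+\approx c\in A$.

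For the third stage, the polar decomposition $s=vc^{1/2}$ produces a partial isometry $v\in(A\otimes\mathcal{K})^{**}$ for which $\phi(d):=v^*dv$ is a $*$-isomorphism from $\overline{(b-\delta)_+(A\otimes\mathcal{K})(b-\delta)_+}$ onto $\overline{c(A\otimes\mathcal{K})c}\cong\overline{cAc}\subseteq A$, exactly as in the proof of Lemma~\ref{PedCuntz_CC}; moreover, for $d$ in its domain, the element $d^{1/2}v$ lies in $A\otimes\mathcal{K}$ (by the same key computation, which hinges on $(b-\delta)_+^{1/2}v=s\in A\otimes\mathcal{K}$) and implements $d\approx\phi(d)$, so Cuntz ranks are preserved. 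Since $(b-\delta)_+=\sum_i(e_i-\delta)_+$ orthogonally inside this hereditary subalgebra, setting $a_i:=\phi((e_i-\delta)_+)\in A$ produces $n$ pairwise orthogonal positive elements of $A$; applying Lemma~\ref{PedCuntz_CC} to the cutoff $t\mapsto(t-\delta)_+$ together with the evident equivalence $e_i\approx e_j$ (implemented by a matrix unit swapping the blocks) yields $(e_i-\delta)_+\approx(e_j-\delta)_+$, whence $a_i\approx a_j$ by transitivity. For $\delta$ sufficiently small, $d_\tau(a_i)=d_\tau((e_i-\delta)_+)\in(1/n-\epsilon,\,1/n)$ uniformly on $\mathrm{T}(A)$, with Lemma~\ref{sup1} delivering the uniform strict lower bound.

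The main obstacle is confining the $a_i$ to $A$ itself, not merely to $A\otimes\mathcal{K}$, while preserving both orthogonality and mutual Cuntz--Pedersen equivalence. Strict comparison only supplies the subequivalence $b\precsim 1_A$; upgrading this to a Cuntz--Pedersen equivalence whose transport map lands inside $A$ hinges on forcing the Cuntz--Pedersen partner $c$ into the corner $(1_A\otimes e_{11})(A\otimes\mathcal{K})(1_A\otimes e_{11})\cong A$, after which the hereditary-subalgebra isomorphism supplied by Lemma~\ref{PedCuntz_CC} carries the orthogonal decomposition of $b$ intact into $A$.
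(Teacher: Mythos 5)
Your argument is correct, but it takes a genuinely different route from the paper's. The first stage (producing $e$ with $1/n-\epsilon'<d_\tau(e)<1/n$ uniformly, by taking multiple copies of the almost-divisibility element) coincides with the paper's opening step. After that the paper proceeds by induction on the number of summands: having placed $a_1,\ldots,a_k$ orthogonally inside $A$, it forms the complement $c=1-f_\eta\bigl(\sum_i a_i\bigr)$, checks via Lemma \ref{almostconstant} that $d_\tau(c)\geq 1/n$, and uses strict comparison to fit one more Cuntz--Pedersen copy of $(a_1-\eta)_+$ inside $\overline{cAc}$, orthogonal to the rest. You instead assemble the entire orthogonal family for free in disjoint matrix blocks of $A\otimes\mathcal{K}$ and invoke strict comparison only once, to compress the sum $b$ into the corner $(1_A\otimes e_{11})(A\otimes\mathcal{K})(1_A\otimes e_{11})\cong A$; the hereditary-subalgebra isomorphism $d\mapsto v^*dv$ then carries the decomposition $(b-\delta)_+=\sum_i(e_i-\delta)_+$ to an orthogonal family in $A$ while preserving Cuntz--Pedersen classes (your verification that $d^{1/2}v\in A\otimes\mathcal{K}$ is exactly the computation in Lemma \ref{PedCuntz_CC}, and the rank bounds follow from Lemma \ref{sup1} as you say). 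This is arguably cleaner: it avoids the induction and the appeal to Lemma \ref{almostconstant}. The one point worth making explicit is that your implementing elements for $a_i\approx a_j$ a priori live in $A\otimes\mathcal{K}$ rather than in $A$; since both $a_i$ and $a_j$ lie in the corner $P(A\otimes\mathcal{K})P$ with $P=1_A\otimes e_{11}$, any $x$ with $x^*x=a_i$ and $xx^*=a_j$ satisfies $x=PxP$ (approximate $x$ by $xg_n(a_i)$ and by $h_n(a_j)x$ with $g_n,h_n$ vanishing at $0$), so the equivalence does hold in $A$ itself. The paper's inductive construction never leaves $A$ and so sidesteps this last point.
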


 \begin{proof}
 Let us first show that for any $\epsilon>0$, there is  $a$ in $M_k(A)$, $k\geq 1$,  with the property that
 \begin{equation}\label{aineq}
 1/n -\epsilon < d_\tau(a) < 1/n, \ \forall \tau \in \mathrm{T}(A).
 \end{equation}
 Let $r,m$ be a natural numbers such that $[r/(m+1),r/m]\subset (1/n -\epsilon ,1/n)$.
 By hypothesis there is $x\in \mathrm{Cu}(A)$  such that $d_\tau(x) \in [1/(m+1),1/m]$ for all $\tau\in \mathrm{T}(A)$. It follows that  $d_\tau(rx) \in [r/(m+1),r/m]$ for all $\tau\in \mathrm{T}(A)$.
 Let $a\in (A \otimes \mathcal{K})_+$ representing $rx \in \mathrm{Cu}(A)$. Then $a$ satisfies \eqref{aineq}. By \cite[Lemma~2.2]{bt} we may arrange that $a\in M_k(A)$ for some  $k\geq 1$.

 We will now prove the Lemma by induction.  Let $n$ and $\epsilon>0$ be given.    Find $a\in M_k(A)$ satisfying (\ref{aineq}).    By Lemma \ref{sup1} there is  $\eta > 0$
 such that
 \[
 1/n - \epsilon < d_\tau( (a-\eta)_+) < 1/n, \ \forall \tau \in \mathrm{T}(A).
 \]
Since $d_\tau(a) < 1/n \leq 1$ for each $\tau \in \mathrm{T}(A)$, we have
 $a \precsim 1_A$ by strict comparison. There is $v$ in  $M_k(A)$ such that $v1_Av^* = (a-\eta)_+$, whence $A \ni a_1 := 1_A v^*v 1_A \approx (a-\eta)_+$
 also satisfies
 \[
  1/n - \epsilon  < d_\tau(a_1) < 1/n, \ \forall \tau \in \mathrm{T}(A).
 \]
 This proves the Lemma in the case $n=1$; for larger $n$ we proceed inductively.

Suppose that for some $k<n$ and $0<\epsilon <1/n$ we have found mutually orthogonal positive elements $a_1,\ldots,a_k \in A$ with the following properties:
\begin{enumerate}
\item[(i)] $a_i \approx a_j$ for all $i,j \in \{1,\ldots,k\}$;
\item[(ii)] $1/n - \epsilon  < d_\tau(a_i) < 1/n$ for each $i \in \{1,\ldots,k\}$ and $\tau \in \mathrm{T}(A)$.
\end{enumerate}
We will explain how to use these $a_i$ to construct mutually orthogonal positive elements $\tilde{a}_1,\ldots,\tilde{a}_{k+1}$
in $A$ which
satisfy (i) and (ii) above with $k$ replaced by $k+1$.  Repeated application of this construction yields the Lemma in full.

By Lemma \ref{sup1} there is $\eta> 0$ such that
\[
1/n - \epsilon < d_\tau((a_i - \eta)_+) < 1/n, \ \forall \tau \in \mathrm{T}(A), \ \forall i \in \{1,\ldots,k\}.
\]
Set $a:=\sum_{i=1}^k a_i$ and  $c:= 1-f_\eta(a)$, where $f_\eta$ is given by \eqref{feta}.
Note that $c$ is orthogonal to $(a-\eta)_+$ and hence to each $(a_i-\eta)_+$.
 For each $\tau \in \mathrm{T}(A)$,
\[
  d_\tau(a)
 =  \sum_{i=1}^k d_\tau(a_i)\in (k/n-k\epsilon,k/n).
\]

By Lemma~\ref{almostconstant} it follows that for all $\tau\in \mathrm{T}(A)$
 \[
 d_\tau(c) = d_\tau(1-f_\eta(a))\geq 1-k/n \geq 1/n.
 \]

We are in a position to construct $\tilde{a}_1,\ldots,\tilde{a}_{k+1} \in A$.  Since $d_\tau(a_1)<1/n
< d_\tau(c)$ for all $\tau\in \mathrm{T}(A)$, it follows that $a_1\precsim c$ by strict comparison.
Therefore there exists $w \in A$ such that
$(a_1-\eta)_+=wcw^*$.  Set  $\tilde{a}_i:=
(a_i-\eta)_{+}$ for $ i \in \{1,\ldots,k\}$ and $\tilde{a}_{k+1} := c^{1/2}w^*wc^{1/2} \approx (a_1-\eta)_+$.
By Lemma~\ref{PedCuntz_CC} we have
\[
\tilde{a}_i =(a_i - \eta)_+\approx (a_1 - \eta)_+, \ \forall i \in \{1,\ldots,k\}.
\]
This establishes (i).  Our choice of $\eta$ establishes (ii).  The mutual orthogonality of the $\tilde{a}_i$ follows
from the orthogonality of $c$ to all of $(a_i - \eta)_+$, $i \in \{1,\ldots,k\}$, and
the fact that $\tilde{a}_{k+1} \in \overline{cAc}$.
 \end{proof}

 \begin{lms}\label{cuntztomvn2}
Let $A$ be a separable unital C$^*$-algebra with strict comparison of positive elements.  Suppose that $b_1,\ldots,b_n$, $n\geq 2$, are orthogonal positive elements in $A$ with the following properties:
\begin{enumerate}
\item[(i)] $b_i \approx b_j$ for each $i,j \in \{1,\ldots,n\}$;
\item[(ii)] $ 1/n - 1/3n^2 < d_\tau(b_i) < 1/n$ for all $\tau \in \mathrm{T}(A)\neq \emptyset$ and each $i \in \{1,\ldots,n\}$.
\end{enumerate}
It follows that there is a unital $*$-homomorphism $\gamma:\mathrm{I}_{n,n+1} \to A$.
\end{lms}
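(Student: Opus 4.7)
The plan is to produce, alongside the given $n$-fold orthogonal Cuntz--Pedersen equivalent family $b_1,\dots,b_n$, a companion $(n{+}1)$-fold orthogonal Cuntz--Pedersen equivalent family $c_1,\dots,c_{n+1}$, mutually orthogonal to the $b_i$'s and of suitably small rank, and then to assemble the two families into the unital $*$-homomorphism $\gamma: \mathrm{I}_{n,n+1}\to A$ via interpolating partial isometries.

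First, using Lemma~\ref{sup1}, I would fix $\eta>0$ small enough that $b_i':=(b_i-\eta)_+$ still satisfies $1/n-1/(3n^2)<d_\tau(b_i')<1/n$ for every $\tau\in\mathrm{T}(A)$; these stay pairwise orthogonal (since $f_\eta(0)=0$) and Cuntz--Pedersen equivalent by Lemma~\ref{PedCuntz_CC}. Set $b:=\sum_i b_i$ and $c:=1-f_\eta(b)$. Because the $b_i$'s are pairwise orthogonal, $b$ commutes with each $b_i$ (indeed $bb_i=b_i^2$), and a spectral calculus check in $C^*(b,b_i)$ using $b\geq b_i$ gives $c\perp b_i'$. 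Applying Lemma~\ref{almostconstant} with $k=1$, $\beta=1$, $\alpha=1/(3n)$ then yields $0\leq d_\tau(c)<2/(3n)$ uniformly in $\tau$. Working inside the hereditary subalgebra $\overline{cAc}$, I would mimic the inductive scheme of Lemma~\ref{orthoelements} (using strict comparison at each stage to cut off equivalent pieces of prescribed small rank and to arrange the orthogonality inductively) to extract $n{+}1$ mutually orthogonal Cuntz--Pedersen equivalent positive elements $c_1,\dots,c_{n+1}$; the upper bound $d_\tau(c)<2/(3n)$ leaves exactly enough room for $n{+}1$ equal-rank pieces to fit.

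It remains to assemble $\gamma$. The Cuntz--Pedersen equivalences $b_i'\approx b_1'$ and $c_j\approx c_1$ are implemented by partial isometries that live in $A$ itself (by the argument of Lemma~\ref{PedCuntz_CC}), so the two families furnish matricial unit systems of sizes $n$ and $n{+}1$ in $A$ that are mutually orthogonal. The defining relations of $\mathrm{I}_{n,n+1}$ are then realized by choosing a continuous path in $A$ connecting the $b_i'$-matricial picture at the $M_n$-endpoint $t=1$ to the $c_j$-matricial picture at the $M_{n+1}$-endpoint $t=0$, built from functional calculus of an appropriate positive element together with the partial isometries just produced. The main obstacle I anticipate is this final assembly step: one must exhibit such a path whose rank profile matches that of $\mathrm{I}_{n,n+1}$ at every $t\in[0,1]$, and the tight rank hypothesis $d_\tau(b_i)>1/n-1/(3n^2)$ is precisely what makes the rank budget close up at every intermediate $t$. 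Strict comparison then has to be invoked repeatedly to turn the resulting tracial inequalities into honest Cuntz subequivalences inside $A$, which is what lets the continuous path actually be realized.
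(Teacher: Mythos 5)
Your reduction to the tracial/rank bookkeeping is sound and partially parallels the paper: passing to $f_\eta(b_i)$ (or $(b_i-\eta)_+$), using Lemma~\ref{sup1} to preserve the lower bounds, and using Lemma~\ref{almostconstant} with $k=\beta=1$, $\alpha=1/(3n)$ to get $d_\tau(1-f_\eta(b))<2/(3n)$ is exactly the right computation, and the inequality $2/(3n)\leq 1/n-1/(3n^2)$ is the arithmetic heart of the lemma. But there is a genuine gap in the second half: the ``final assembly step'' that you explicitly defer is not a technical detail --- it is the entire content that distinguishes this lemma from a statement about ranks. Cuntz--Pedersen equivalences $x^*x=b_i'$, $xx^*=b_j'$ between positive elements that are not projections do not furnish a system of matrix units (the implementing elements are not partial isometries in $A$, and there is no reason the compositions $x_{ij}x_{jk}$ behave multiplicatively), so the claim that ``the two families furnish matricial unit systems of sizes $n$ and $n+1$'' is unjustified, and the interpolating path realizing the fiberwise relations of $\mathrm{I}_{n,n+1}$ is never constructed. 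Making this rigorous requires the order-zero-map technology of R{\o}rdam and Winter, and indeed the paper's proof does none of this by hand: it simply verifies the hypothesis of \cite[Prop.~5.1]{RorWin:JiangSu}, namely that there exist orthogonal, pairwise Cuntz--Pedersen equivalent $a_1,\ldots,a_n$ with $1-\sum_i a_i\precsim (a_1-\epsilon)_+$, which by strict comparison reduces to the tracial inequality $d_\tau(1-f_\eta(b))<2/(3n)\leq 1/n-1/(3n^2)<d_\tau((a_1-\epsilon)_+)$ that you have essentially already established.

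A secondary issue is that even granting some assembly mechanism, the data you propose to feed it is not the right data. The criterion needs the complement $1-\sum_i a_i$ to be Cuntz \emph{subequivalent to a single cut-down} $(a_1-\epsilon)_+$ of one of the $n$ pieces; it does not call for an auxiliary family of $n+1$ mutually equivalent small-rank elements sitting inside $\overline{cAc}$, and it is not clear what role such a family would play in producing a \emph{unital} homomorphism (the $M_{n+1}$-fiber of $\mathrm{I}_{n,n+1}$ at $t=0$ is unital of full rank, not of rank $2/(3n)$). I recommend discarding the $(n+1)$-family construction entirely, stating the R{\o}rdam--Winter criterion, and finishing with the strict-comparison step you already have.
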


\begin{proof}
By (\cite[Prop.~5.1]{RorWin:JiangSu}), it will suffice to find orthogonal positive elements
$a_1,\ldots,a_n$ of $A$ such that $a_i \approx a_j$ for each $i,j \in \{1,\ldots,n\}$ and
$(1- \sum_{i=1}^n a_i) \precsim (a_1-\epsilon)_+$ for some $\epsilon>0$.
The latter condition can be replaced by
$d_\tau(1- \sum_{i=1}^n a_i) < d_\tau((a_1-\epsilon)_+)$, $\forall \tau \in \mathrm{T}(A)$,
due to the strict comparison assumption.

Let $b_1,\ldots,b_n$ be as specified in the statement of the Lemma and
set $b=\sum_{i=1}^n b_i $.
In view of the mutual orthogonality of the $b_i$,  (ii) implies
\begin{equation}
 1-1/3n<d_\tau(b)<1,\quad \forall \tau \in \mathrm{T}(A).
\end{equation}
Apply Lemma \ref{almostconstant}  to find $\eta>0$ such that
\[d_\tau(1-f_\eta(b))<2/3n\]
where $f_\eta$ is defined by \eqref{feta}.
Set $a_i = f_\eta(b_i)$ and note that
 the mutual orthogonality of the $b_i$ implies that
\[
f_\eta( b) = \sum_{i=1}^n f_\eta(b_i) =\sum_{i=1}^n a_i.
\]
 Since $d_\tau(a_1)=d_\tau(b_1)$, by Lemma~\ref{sup1} there is $\epsilon>0$ such that
  $1/n-1/3n^2 < d_\tau((a_1-\epsilon)_+)$ for all $\tau \in \mathrm{T}(A)$.
Now for each $\tau \in \mathrm{T}(A)$ we have
\[
d_\tau\left(1- \sum_i^n a_i\right)
 =  d_\tau\left(1- f_\eta(b)\right)
 <  2/3n
 \leq  1/n-1/3n^2< d_\tau((a_1-\epsilon)_+).
\]

To complete the proof of the Lemma, we observe that $a_i \approx a_j$ for each $i,j \in \{1,\ldots,n\}$
as a consequence of Lemma~\ref{PedCuntz_CC}.
 \end{proof}

\begin{thms}\label{embedcriterion}
 Let $A$ be a separable unital C$^*$-algebra with strict comparison of positive elements and nonempty tracial state space.  The following statements are equivalent:
 \begin{enumerate}
 \item[(i)] for each $m \in \mathbb{N}$, there is $x \in \mathrm{Cu}(A)$ such that
 \[
 mx \leq \langle 1_A \rangle \leq (m+1) x;
 \]
 \item[(ii)] for each $m \in \mathbb{N}$, there is $x \in \mathrm{Cu}(A)$ such that
 \[
 m d_\tau(x) \leq 1 \leq (m+1) d_\tau(x), \ \forall \tau \in \mathrm{T}(A);
 \]
 \item[(iii)] for each $m \in \mathbb{N}$, there is a unital $*$-homomorphism $\phi_m:\mathrm{I}_{m,m+1} \to A$.
 \end{enumerate}
 \end{thms}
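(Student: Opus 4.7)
The plan is to establish the cycle (i) $\Rightarrow$ (ii) $\Rightarrow$ (iii) $\Rightarrow$ (ii) $\Rightarrow$ (i), where the two nontrivial implications out of (ii) carry the real content; the lemmas of this section have been set up precisely to enable both.

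For (i) $\Rightarrow$ (ii), I will apply the rank function $d_\tau$, which is order- and supremum-preserving on $\mathrm{Cu}(A)$, to the Cuntz inequalities $mx \leq \langle 1_A \rangle \leq (m+1) x$; using $d_\tau(1_A) = 1$ this gives (ii) at once. For (iii) $\Rightarrow$ (ii), given a unital $*$-homomorphism $\phi_m : \mathrm{I}_{m,m+1} \to A$, I will produce a positive $y \in \mathrm{I}_{m,m+1}$ with $y(0) = \mathbf{1}_m \otimes p$ for a rank-one projection $p \in \mathrm{M}_{m+1}$ (support rank $m$ in $\mathrm{M}_{m(m+1)}$) and $y(1) = q \otimes \mathbf{1}_{m+1}$ for a rank-one projection $q \in \mathrm{M}_m$ (support rank $m+1$), interpolated so the fiberwise support rank stays in $\{m, m+1\}$. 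Every trace of $\mathrm{I}_{m,m+1}$ is an average of the evaluation traces $\tau_t$, so a direct computation gives $d_\tau(y) \in [1/(m+1), 1/m]$ for all $\tau$; setting $x := \langle \phi_m(y) \rangle$ and using $\tau \circ \phi_m \in \mathrm{T}(\mathrm{I}_{m,m+1})$ then yields (ii).

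For (ii) $\Rightarrow$ (iii), given $m \geq 2$ I will invoke Lemma \ref{orthoelements} with $n = m$ and, say, $\epsilon = 1/(3m^2)$ to obtain mutually orthogonal positive $b_1,\ldots,b_m \in A$ satisfying precisely the hypothesis of Lemma \ref{cuntztomvn2}; the latter then delivers the unital $*$-homomorphism $\mathrm{I}_{m,m+1} \to A$.

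Finally, for (ii) $\Rightarrow$ (i), given $m$ I will apply Lemma \ref{orthoelements} with $n = m$ and any $\epsilon \in (0, 1/(m(m+1)))$, producing orthogonal positive $a_1,\ldots,a_m \in A$ with $a_i \approx a_j$ and $1/m - \epsilon < d_\tau(a_i) < 1/m$ for all $\tau$. Setting $x := \langle a_1 \rangle \in \mathrm{Cu}(A)$, orthogonality and Cuntz equivalence give $mx = \langle a_1 + \cdots + a_m \rangle$, whence $d_\tau(mx) < 1$ strictly, so $mx \leq \langle 1_A \rangle$ by strict comparison; while $(m+1) d_\tau(x) > (m+1)(1/m - \epsilon) > 1$ strictly by the choice of $\epsilon$, so $\langle 1_A \rangle \leq (m+1) x$ also by strict comparison. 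The main obstacle is exactly this need for strict inequalities: Definition \ref{strictcomp} only yields $\precsim$ from strictly smaller rank functions, whereas (ii) supplies only weak inequalities. Forcing $\epsilon$ strictly below the ``gap'' $1/m - 1/(m+1) = 1/(m(m+1))$ is what converts the weak trace inequalities of (ii) into strict ones usable with strict comparison, at the cost of replacing the given element of (ii) by a carefully chosen one built from the orthogonal family of Lemma \ref{orthoelements}.
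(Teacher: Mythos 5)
Your proof is correct, and two of its legs --- (i) $\Rightarrow$ (ii) by applying the order-preserving state $d_\tau$, and (ii) $\Rightarrow$ (iii) via Lemmas \ref{orthoelements} and \ref{cuntztomvn2} --- coincide with the paper's. Where you differ is in how the cycle is closed: the paper proves (iii) $\Rightarrow$ (i) directly by citing R\o rdam \cite{R4}, whereas you replace this with (iii) $\Rightarrow$ (ii) (an explicit positive element $y$ of $\mathrm{I}_{m,m+1}$ whose fiberwise support ranks lie in $\{m,m+1\}$, pushed forward through $\phi_m$ using $d_\tau(\phi_m(y)) = d_{\tau\circ\phi_m}(y)$ and the description of traces on $\mathrm{I}_{m,m+1}$ as averages of fiber traces) together with a new implication (ii) $\Rightarrow$ (i) (Lemma \ref{orthoelements} with $\epsilon < 1/(m(m+1))$, then strict comparison applied twice). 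Both of these legs check out: orthogonality plus Cuntz--Pedersen equivalence of the $a_i$ give $mx = \langle a_1+\cdots+a_m\rangle$, and forcing $\epsilon$ below the gap $1/m - 1/(m+1)$ is exactly what turns the weak inequalities of (ii) into the strict ones demanded by Definition \ref{strictcomp}. What your route buys is self-containedness --- no appeal to \cite{R4} --- at the cost of invoking strict comparison on a leg where the paper's citation makes it unnecessary: the element $y$ you construct already satisfies $m\langle y\rangle \le \langle 1\rangle \le (m+1)\langle y\rangle$ in $\mathrm{Cu}(\mathrm{I}_{m,m+1})$, so functoriality of $\mathrm{Cu}$ would yield (iii) $\Rightarrow$ (i) with no comparison hypothesis at all. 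The only cosmetic omission is the case $m=1$ of (ii) $\Rightarrow$ (iii), which is trivial since $\mathrm{I}_{1,2}$ admits the unital character of evaluation at $1$.
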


 \begin{proof}
 For $\mathrm{(i)} \Rightarrow \mathrm{(ii)}$, we apply the order preserving state $d_\tau$ to the inequality
 \[
 mx \leq \langle 1_A \rangle \leq (m+1) x_.
 \]
 The implication $\mathrm{(ii)} \Rightarrow \mathrm{(iii)}$ is the combination of Lemmas \ref{orthoelements} and \ref{cuntztomvn2}.
 Finally, $\mathrm{(iii)} \Rightarrow \mathrm{(i)}$ is due to R\o rdam \cite{R4}.
 \end{proof}

\section{Rank functions on trace spaces}\label{rankfunc}
\begin{lms}\label{almostdelta2}
Let $A$ be a unital simple separable infinite-dimensional C$^*$-algebra and $\tau$ a normalised trace on $A$.
Let $0<s<r$  be given.  It follows that there are an open neighbourhood $U$ of $\tau$ in $\mathrm{T}(A)$
and a positive element $a$ in some $\mathrm{M}_k(a)$ such that
\[
s < d_\gamma(a) < r, \ \forall \gamma \in U.
\]
\end{lms}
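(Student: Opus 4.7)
The plan is to first construct an element of some $\mathrm{M}_k(A)$ whose rank at $\tau$ lies strictly in $(s,r)$, and then apply Lemma~\ref{sup1} to upgrade this single-trace estimate to one valid on an open neighbourhood of $\tau$ in $\mathrm{T}(A)$.

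The starting ingredient is a positive element of arbitrarily small but positive rank at $\tau$. Since $A$ is simple, unital and infinite-dimensional it has no minimal projections (else simplicity would force $A\cong\mathrm{M}_n(\mathbb{C})$), and a standard argument then yields a self-adjoint element of $A$ with infinite spectrum; applying functional calculus to $N$ disjoint bumps on that spectrum produces, for every $N\in\mathbb{N}$, pairwise orthogonal nonzero positive contractions $e_1,\dots,e_N\in A$ with $\sum_i e_i\le 1_A$. Because $\tau$ is faithful (again by simplicity), $d_\tau(e_i)>0$ for each $i$, and orthogonality gives
\[
\sum_{i=1}^{N} d_\tau(e_i)\;=\;d_\tau\!\bigl(\textstyle\sum_i e_i\bigr)\;\le\;d_\tau(1_A)\;=\;1.
\]
Choosing $N$ with $1/N<r-s$ and invoking the pigeonhole principle, there is an index $j$ such that $e:=e_j$ satisfies $0<d_\tau(e)\le 1/N<r-s$.

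To hit the window $(s,r)$ I would then amplify: let $k$ be the smallest positive integer with $k\,d_\tau(e)>s$. The inequality $(k-1)d_\tau(e)\le s$ is either trivial ($k=1$) or forced by minimality, so
\[
s\;<\;k\,d_\tau(e)\;\le\;s+d_\tau(e)\;<\;s+(r-s)\;=\;r.
\]
Setting $a_0:=\mathrm{diag}(e,\dots,e)\in\mathrm{M}_k(A)$ (with $k$ copies of $e$) yields $d_\tau(a_0)=k\,d_\tau(e)\in(s,r)$. Finally, Lemma~\ref{sup1} applied to $a_0$ with $\alpha=s$, $\beta=r$ and the closed set $X=\{\tau\}$ produces $\epsilon>0$ and an open neighbourhood $U$ of $\tau$ such that $s<d_\gamma((a_0-\epsilon)_+)<r$ for every $\gamma\in U$; taking $a:=(a_0-\epsilon)_+\in\mathrm{M}_k(A)_+$ finishes the argument.

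The one genuinely delicate point is the very first step: confirming that a simple, unital, infinite-dimensional C$^*$-algebra really does contain arbitrarily many pairwise orthogonal nonzero positive contractions in its unit ball, equivalently, a self-adjoint element with infinite spectrum. This is where all three hypotheses on $A$ enter, via the nonexistence of minimal projections; once that is in hand, pigeonhole, amplification, and Lemma~\ref{sup1} handle the rest.
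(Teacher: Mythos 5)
Your proposal is correct and follows essentially the same route as the paper's proof: produce a positive element whose rank at $\tau$ is strictly positive but less than $r-s$, amplify by taking a direct sum of $k$ copies in $\mathrm{M}_k(A)$ so the rank at $\tau$ lands in $(s,r)$, and finish by applying Lemma~\ref{sup1} with $X=\{\tau\}$. The only (cosmetic) difference is in how the small-rank element is produced: the paper takes a positive $b$ with $0$ as an accumulation point of its spectrum and uses functional calculus so that $d_\tau(f_n(b))\to 0$, whereas you take $N$ pairwise orthogonal positive contractions summing to at most $1_A$ and pigeonhole.
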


\begin{proof}
Let $\tau$ and $0<s<r$ as in the hypotheses of the Lemma be given. 
 Since $A$ is infinite-dimensional, there is $b \in A_+$ with zero as an accumulation point of its spectrum.  For each $n\in \mathbb{N}$,  let $f_n$ be a positive continuous function with support $(0,1/n)$ and set $b_n:=f_n(b)$.  It follows that
 \[
 0 < d_\tau(b_n) = \mu_\tau( (0,1/n) \cap \sigma(b)) \stackrel{n \to \infty}{\longrightarrow} 0.
 \]
 Choose $n \in \mathbb{N}$ large enough such that $d_\tau(b_n)<r-s$. Then
 \[
 s < k d_\tau(b_n) < r
 \]
 for some $k \in \mathbb{N}$.  If we set $b:= \oplus_{i=1}^k b_n \in \mathrm{M}_k(A)$, then
 $d_\tau(b) = d_\tau \left( \oplus_{i=1}^k b_n \right) = k d_\tau(b_n)$ and hence 
 \[s < d_\tau(b) < r.\]
We conclude the proof by applying Lemma~\ref{sup1} with $X=\{\tau\}$.
 \end{proof}

 \begin{lms}\label{compression}
 Let $A$ be a unital  C$^*$-algebra and $\tau$ a normalised trace on $A$.  Let $x,y$ be positive elements  in  $\mathrm{M}_k(A)$. Then $d_\tau(y^{1/2}x\,y^{1/2}) \geq d_\tau(x)-d_\tau(1_k-y)$, where $1_k$ denotes the unit of $\mathrm{M}_k(A)$.
 \end{lms}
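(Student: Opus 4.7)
The plan is to decompose $x$ into a sum of two positive elements and apply subadditivity of $d_\tau$. Since the term $d_\tau(1_k - y)$ appears on the right-hand side, the statement presupposes that $1_k - y$ is positive, so I assume $y \leq 1_k$. Then the trivial identity
\[
x \;=\; x^{1/2} y x^{1/2} \,+\, x^{1/2}(1_k - y)\, x^{1/2}
\]
exhibits $x$ as a sum of two positive elements of $\mathrm{M}_k(A)$. The rank $d_\tau$ is subadditive on such sums: passing to the finite tracial von Neumann algebra $\pi_\tau(\mathrm{M}_k(A))''$, the support projection of a sum of positives equals the join of the individual supports, and $\bar{\tau}(p \vee q) \leq \bar{\tau}(p) + \bar{\tau}(q)$ for projections in a finite von Neumann algebra. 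Hence
\[
d_\tau(x) \;\leq\; d_\tau\bigl(x^{1/2} y x^{1/2}\bigr) + d_\tau\bigl(x^{1/2}(1_k - y) x^{1/2}\bigr).
\]

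I would then simplify each summand using the Cuntz--Pedersen equivalence $z^*z \approx zz^*$ recalled in the preliminaries. Taking $z = y^{1/2} x^{1/2}$ gives $x^{1/2} y x^{1/2} \approx y^{1/2} x y^{1/2}$, and with $w = (1_k - y)^{1/2} x^{1/2}$ one gets $x^{1/2}(1_k - y) x^{1/2} \approx (1_k - y)^{1/2} x (1_k - y)^{1/2}$. Since $\approx$ implies $\sim$, the corresponding values of $d_\tau$ agree on each pair. Finally, the operator inequality
\[
(1_k - y)^{1/2} x (1_k - y)^{1/2} \;\leq\; \|x\|\,(1_k - y)
\]
combined with the elementary fact that $c \leq d$ for positives forces $s(c) \leq s(d)$ (as $\ker d \subseteq \ker c$), and hence $d_\tau(c) \leq d_\tau(d)$, yields
\[
d_\tau\bigl(x^{1/2}(1_k - y) x^{1/2}\bigr) \;\leq\; d_\tau(1_k - y).
\]
Substituting back and rearranging gives the desired inequality.

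I do not foresee a serious obstacle. The algebraic decomposition is immediate, subadditivity of $d_\tau$ on sums of positive elements is a standard consequence of the support-projection viewpoint in the enveloping von Neumann algebra, and the chain $\approx \Rightarrow \sim \Rightarrow$ equality of rank is explicit in the preliminaries (compare the use of Cuntz--Pedersen equivalence in Lemma \ref{PedCuntz_CC}). The only subtle point is the implicit assumption $y \leq 1_k$; if one wished to treat arbitrary positive $y$, one could replace $y$ by $y \wedge 1_k$, note that $d_\tau(y^{1/2} x y^{1/2}) \geq d_\tau((y\wedge 1_k)^{1/2} x (y \wedge 1_k)^{1/2})$ (via the Cuntz--Pedersen trick $y^{1/2} x y^{1/2} \approx x^{1/2} y x^{1/2} \geq x^{1/2}(y \wedge 1_k) x^{1/2} \approx (y \wedge 1_k)^{1/2} x (y \wedge 1_k)^{1/2}$), and interpret $d_\tau(1_k - y)$ as $d_\tau((1_k - y)_+)$.
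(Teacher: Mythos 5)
Your proof is correct and follows essentially the same route as the paper: the identical decomposition $x = x^{1/2}yx^{1/2} + x^{1/2}(1_k-y)x^{1/2}$, Cuntz--Pedersen equivalence to replace $x^{1/2}yx^{1/2}$ by $y^{1/2}xy^{1/2}$, and domination of the remainder term by $1_k-y$; the paper merely packages the subadditivity step as $a+b \precsim a\oplus b$ followed by additivity of $d_\tau$ on direct sums, and bounds the second term via $x^{1/2}(1_k-y)x^{1/2} \precsim 1_k-y$ rather than an operator inequality, which are cosmetic differences. Your observation that the statement implicitly assumes $y \leq 1_k$ is accurate and consistent with how the lemma is applied throughout the paper.
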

 \begin{proof}
   We have
  \begin{eqnarray*}
x & = & x^{1/2} y \,x^{1/2} + x^{1/2}(1_k-y)x^{1/2} \\
& \precsim & x^{1/2} y\, x^{1/2} \oplus x^{1/2}(1_k-y)x^{1/2} \\
& \approx & y^{1/2}x \,y^{1/2} \oplus x^{1/2}(1_k-y)x^{1/2} \\
& \precsim & y^{1/2}x \,y^{1/2} \oplus (1_k-y).
\end{eqnarray*}
Applying $d_\tau$ we obtain $ d_\tau(x) \leq d_\tau(y^{1/2}x\,y^{1/2})+d_\tau(1_k-y)$.
 \end{proof}

 We recall a result from \cite{lintai} based on work of Cuntz and Pedersen:

 \begin{thms}\label{hitf}(\cite[Thm.~9.3]{lintai})
 Let $A$ be a unital simple C$^*$-algebra with nonempty tracial state space, and let $f$ be a strictly positive affine continuous function on $\mathrm{T}(A)$.  It follows that for any $\epsilon>0$ there is a positive element $a$ of $A$ such that $f(\tau) = \tau(a), \ \forall \tau \in  \mathrm{T}(A)$, and $\|a\| < \|f\| + \epsilon$.
 \end{thms}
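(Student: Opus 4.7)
Set $M := \|f\| + \epsilon$ and write $\widehat{c}$ for the affine function $\tau \mapsto \tau(c)$. The desired conclusion is equivalent to splitting $M \cdot 1_A = a + b$ with $a,b \in A_+$, $\widehat{a} = f$ and $\widehat{b} = M - f$: indeed, $\|a\| \leq M$ for a positive $a$ amounts to $0 \leq a \leq M \cdot 1_A$, and then $b := M \cdot 1_A - a$ is the needed complement. Both $f$ and $M - f$ lie strictly above zero in $\mathrm{Aff}(\mathrm{T}(A))$, the latter because $M > \|f\|$; this strict gap is the budget that will make the conclusion achievable.

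\textbf{Two steps.} My plan is a Cuntz-Pedersen argument. First, I would invoke the classical Cuntz-Pedersen surjectivity: in a unital simple C$^*$-algebra $A$ with $\mathrm{T}(A) \neq \emptyset$, every strictly positive $g \in \mathrm{Aff}(\mathrm{T}(A))$ is of the form $\widehat{c}$ for some $c \in A_+$, with no norm control. This yields $a_0 \in A_+$ with $\widehat{a_0} = f$. Next, put $b_0 := M \cdot 1_A - a_0 \in A_{\mathrm{sa}}$; then $\widehat{b_0} = M - f > 0$, but $b_0$ itself need not be positive in $A$. I would then produce $d$ in the closed linear span of commutators $\overline{[A,A]}_{\mathrm{sa}}$ such that $a := a_0 - d$ and $b := b_0 + d$ both lie in $A_+$. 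Since every tracial state annihilates commutators, this adjustment preserves $\widehat{a} = f$ and $\widehat{b} = M - f$, and $a + b = M \cdot 1_A$ forces $\|a\| \leq M = \|f\| + \epsilon$.

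\textbf{Main obstacle.} The hard step is producing such a commutator correction $d$. The sandwich $a_0 - M \cdot 1_A \leq d \leq a_0$ that $d$ must satisfy reads, at the level of tracial functionals, as $f - M \leq \widehat{d} \leq f$, which is satisfied strictly by $\widehat{d} = 0$ thanks to $0 < f < M$. There is therefore no obstruction in $\mathrm{Aff}(\mathrm{T}(A))$; what remains is to lift this strict numerical compatibility to an actual $d$ in the order interval $[a_0 - M \cdot 1_A, a_0] \subset A_{\mathrm{sa}}$. This rests on the Cuntz-Pedersen identification $\overline{[A,A]}_{\mathrm{sa}} = \{x \in A_{\mathrm{sa}} : \widehat{x} = 0\}$, combined with a refined lifting result for the trace map on simple unital C$^*$-algebras where the strict gap $M - \|f\| > 0$ keeps the boundary conditions in the interior; this is the technical heart of the argument, carried out in \cite{lintai}.
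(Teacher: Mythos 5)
The statement you are proving is not proved in the paper at all: it is quoted verbatim as \cite[Thm.~9.3]{lintai}, so there is no internal argument to compare yours against. Judged as a standalone proof, your proposal has a genuine gap, and it is exactly the one you flag yourself. The reduction in your first two paragraphs is fine: writing $M=\|f\|+\epsilon$, it suffices to find $a\in A_{\mathrm{sa}}$ with $\widehat{a}=f$ and $0\leq a\leq M\cdot 1_A$ (modulo the cosmetic point that this yields $\|a\|\leq \|f\|+\epsilon$ rather than the strict inequality; run the argument with $\epsilon/2$). But the entire content of the theorem is then concentrated in the step you defer: producing $d\in \overline{[A,A]}_{\mathrm{sa}}$ lying in the order interval $[a_0-M\cdot 1_A,\,a_0]$. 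Knowing that $0$ lies strictly between $f-M$ and $f$ in $\mathrm{Aff}(\mathrm{T}(A))$ does not by itself produce such a $d$: the image of an order interval of $A_{\mathrm{sa}}$ under $x\mapsto\widehat{x}$ need not be the corresponding interval of affine functions, and bridging that discrepancy is precisely the interpolation/lifting problem that Lin's Theorem 9.3 (and the Cuntz--Pedersen machinery behind it) solves, typically by an iterative correction argument. Asserting that it follows from ``a refined lifting result \dots carried out in \cite{lintai}'' makes the argument circular relative to the statement being proved.

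A secondary soft spot: your ``first step'' invokes, as classical, the surjectivity of $A_+\to\{g\in\mathrm{Aff}(\mathrm{T}(A)) : g>0\}$ with no norm control. What is genuinely off-the-shelf is the surjectivity of $A_{\mathrm{sa}}\to\mathrm{Aff}(\mathrm{T}(A))$ together with the Cuntz--Pedersen identification of the kernel with $\overline{[A,A]}_{\mathrm{sa}}$ for simple $A$; upgrading a self-adjoint preimage to a \emph{positive} one already requires an argument of the same nature as the one you are postponing, so you cannot cleanly separate an ``easy surjectivity'' step from a ``hard norm-control'' step. In short: the skeleton is the right one and correctly locates the difficulty, but as written it restates the theorem rather than proving it; to complete it you would need to actually carry out the interpolation of $0$ by an element of $\overline{[A,A]}_{\mathrm{sa}}$ inside the order interval, or simply cite \cite{lintai} for the whole statement as the paper does.
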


\begin{lms}\label{almostdelta1}
Let $A$ be a separable unital simple infinite-dimensional C$^*$-algebra whose tracial state space has compact extreme boundary $X=\partial_e\mathrm{T}(A)$.
It follows that for any $\delta > 0$ and closed set $Y \subseteq X$  there is a nonzero positive element $a$ of $A$ with the property that
$d_{\tau}(a) < \delta, \ \forall \tau \in Y$
and $d_{\tau}(a) = 1, \ \forall \tau \in X \backslash Y$.\end{lms}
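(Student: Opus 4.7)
The plan is to construct $a$ as a norm-convergent weighted sum $a=\sum_{n=1}^{\infty}2^{-n}a_n$ of positive elements $a_n\in A$ of norm less than $1$, with each $a_n$ engineered to have rank close to $1$ on a growing compact $K_n\subseteq X\setminus Y$ exhausting $X\setminus Y$ and uniformly small rank on $Y$. The edge case $Y=X$ is handled separately by a direct construction of a small positive element.

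Assuming $Y\neq X$, separability of $A$ makes $X$ metrisable; pick a decreasing sequence of open neighbourhoods $V_n\supseteq Y$ with $\overline{V_{n+1}}\subseteq V_n$ and $\bigcap_n V_n=Y$, and set $K_n:=X\setminus V_n$. By Urysohn, choose continuous $g_n\colon X\to[\eta_n,1]$ equal to $\eta_n$ on $\overline{V_{n+1}}$ and to $1$ on $K_n$, with $\eta_n:=\delta\cdot 2^{-2n-2}$. Extend $g_n$ uniquely to a continuous affine function $\tilde g_n$ on the Choquet simplex $\mathrm{T}(A)$ (using that $X$ is its compact extreme boundary), and apply Theorem \ref{hitf} to obtain $h_n\in A_+$ with $\tau(h_n)=\tilde g_n(\tau)$ for all $\tau\in\mathrm{T}(A)$ and $\|h_n\|<1+2^{-n}$.

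Set $a_n:=(h_n-2^{-n})_+$, so $\|a_n\|<1$. On $Y$, Markov's inequality $\lambda\mu_\tau((\lambda,\infty))\leq\tau(h_n)$ yields $d_\tau(a_n)\leq 2^n\eta_n=\delta\cdot 2^{-n-2}$. On $K_n$, the bound $\tau(a_n)\geq\tau(h_n)-2^{-n}=1-2^{-n}$ (coming from $(h-\lambda)_+\geq h-\lambda$) combined with $\|a_n\|<1$ yields $d_\tau(a_n)\geq\tau(a_n)/\|a_n\|\geq 1-2^{-n}$. Define $a:=\sum_{n\geq 1}2^{-n}a_n\in A_+$. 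Since $a\geq 2^{-n}a_n$, one has $d_\tau(a)\geq d_\tau(a_n)$ for every $n$. For each $\tau\in X\setminus Y=\bigcup_n K_n$, there is $n_0$ with $\tau\in K_n$ for all $n\geq n_0$, giving $d_\tau(a)\geq 1-2^{-n}\to 1$; combined with the obvious $d_\tau(a)\leq 1$ this yields $d_\tau(a)=1$. For $\tau\in Y$, the support of $a$ in the $\mathrm{II}_1$ factor $\pi_\tau(A)''$ equals the join $\bigvee_n\mathrm{supp}(a_n)$, and countable subadditivity of the normal trace on joins of projections gives $d_\tau(a)\leq\sum_n d_\tau(a_n)\leq\delta/2<\delta$. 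Nonvanishing of $a$ follows from $a_1\neq 0$.

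The one nontrivial step is the identity $\mathrm{supp}(a)=\bigvee_n\mathrm{supp}(a_n)$ in $\pi_\tau(A)''$: the inclusion $\leq$ follows from $(1-\bigvee_n\mathrm{supp}(a_n))a_n=0$ for each $n$, while the reverse inclusion uses the positivity of $\langle a_n\xi,\xi\rangle$ to conclude that any vector in $\mathrm{ker}(\pi_\tau(a))$ lies in $\mathrm{ker}(\pi_\tau(a_n))$ for every $n$.
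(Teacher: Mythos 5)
Your construction for the main case $Y \neq X$ is correct and is essentially the paper's own argument: both proofs use Theorem \ref{hitf} together with the identification $\mathrm{Aff}(\mathrm{T}(A)) \cong C(X,\mathbb{R})$ to realize prescribed continuous functions on $X$ as trace evaluations of positive elements, cut these down by functional calculus, bound the rank from below on an exhaustion of $X\setminus Y$ via $d_\tau(b)\geq \tau(b)/\|b\|$, bound it from above on $Y$ via Markov's inequality, and sum a rapidly weighted series. The only real difference is the upper bound for $\tau\in Y$: the paper uses lower semicontinuity of $d_\tau$ in norm together with finite subadditivity $d_\tau(x+y)\leq d_\tau(x)+d_\tau(y)$ (from $x+y\precsim x\oplus y$), whereas you pass to support projections in $\pi_\tau(A)''$ and use countable subadditivity of the normal trace on joins; both are valid. (A tiny slip: $a_1$ may be zero if $K_1=\emptyset$, but $a\neq 0$ follows anyway since $d_\tau(a)=1$ on the nonempty set $X\setminus Y$.)

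The genuine gap is the case $Y=X$, which you dispatch as "a direct construction of a small positive element." There the requirement is a nonzero positive $a\in A$ with $d_\tau(a)<\delta$ \emph{uniformly over all of} $X$, and no direct construction is available: a positive element of small norm or small trace can still have rank $1$ at every trace (e.g.\ $\epsilon 1_A$ or a small multiple of a full projection), and if one takes $b\in A_+$ with $0$ an accumulation point of its spectrum and cuts down to $f_n(b)$ supported in $(0,1/n)$, then $d_\tau(f_n(b))=\mu_\tau((0,1/n)\cap\sigma(b))\to 0$ only pointwise in $\tau$; the map $\tau\mapsto d_\tau(f_n(b))$ is merely lower semicontinuous, so Dini's theorem gives no uniformity. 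This is exactly why the paper spends real effort here: when $X$ is a singleton it invokes Lemma \ref{almostdelta2}, and when $X$ has at least two points it chooses disjoint open sets $U,V$, applies the already-established case of the lemma to $U^c$ and $V^c$ to obtain $b,c$ with rank $<\delta$ off $V$, resp.\ off $U$, and sets $a=b^{1/2}cb^{1/2}$, which has rank $<\delta$ everywhere since $a\precsim c$ and $a\approx c^{1/2}bc^{1/2}\precsim b$; showing $a\neq 0$ then requires Lemma \ref{compression} together with a functional-calculus adjustment of $b$ via Lemma \ref{almostconstant}. Your proof needs an argument of this kind to cover the case $Y=X$.
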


\begin{proof}
Let $\delta$ and $Y$ be given.
Let us assume first that $Y \neq X$.
Fix a decreasing sequence $\{U_n\}_{n=2}^\infty$ of open subsets of $X$ with the property that
$Y = \cap_{n=2}^\infty  U_n$ and $U_n^c \neq \emptyset$.  The compactness of $X$ implies that
the natural restriction map $\mathrm{Aff}(\mathrm{T}(A))\to C(\partial_e\mathrm{T}(A),\mathbb{R})$ is an isometric isomorphism of Banach spaces,  \cite[Cor.~11.20]{Go:book}.  We may therefore use Theorem \ref{hitf} to
produce,  a sequence $(b_n)_{n=2}^\infty$ in $A_+$ with the following properties:
\begin{enumerate}
\item[(i)] $\tau(b_n) > 1-1/n, \ \forall \tau \in U_n^c$;
\item[(ii)] $\tau(b_n) < \delta/2^{n}n, \ \forall \tau \in Y$;
\item[(iii)] $\|b_n\| \leq 1$.
\end{enumerate}

 For any $\tau \in U_n^c$ we have
\[
d_\tau((b_n-1/n)_+)  \stackrel{(iii)}\geq  \tau((b_n-1/n)_+)
 \geq  \tau(b_n)-1/n
\stackrel{(i)}{>}  1-2/n.\]
In particular $(b_n-1/n)_+\neq 0$. Moreover, for any $\tau \in Y$ we have
\[ d_\tau((b_n-1/n)_+) =
n\int \frac{1}{n} \chi_{(1/n,\infty)} \ d\mu_\tau\leq n\tau(b_n)\stackrel{(ii)}{<}\delta/ 2^{n}.\]

Set $c_n:=2^{-n}(b_n-1/n)_+$, so that $d_\tau(c_n) > 1- 2/n$ for each $\tau \in U_n^c$,  $d_\tau(c_n) <  \delta/2^n$ for each
$\tau \in Y$ and $\|c_n\|\leq 2^{-n}$.  Now set
\[
a:= \sum_{n=2}^\infty c_n \in A_+.
\]
If $\tau \in Y$, then, using the lower semicontinuity of $d_\tau$, we have
\begin{eqnarray*}
d_\tau(a) & \leq & \liminf_k d_\tau\left( \sum_{n=2}^k c_n \right) \\
& \leq & \liminf_k \sum_{n=2}^k d_\tau(c_n) \\
& < & \delta.
\end{eqnarray*}

If $\tau \in X \backslash Y$, then $\tau \in U_k^c$ for all $k$ sufficiently large.  It follows that for these same $k$,
\[
d_\tau(a) = d_\tau \left( \sum_{n=2}^\infty c_n \right) \geq   d_\tau(c_k) \geq 1-2/k.
\]
We conclude that $d_\tau(a) \geq 1$ for each such $\tau$.  On the other hand, $a \in A$, so $d_\tau(a) \leq 1$ for any
$\tau \in \mathrm{T}(A)$.  This completes the proof in the case that $Y \neq X$.

If $Y = X$ and $X$ is a singleton, then the Theorem follows from Lemma \ref{almostdelta2};  we assume that $X$ contains
at least two points, say $\tau, \gamma$.  Since $X$ is Hausdorff we can find open subsets $U$ and $V$ of $X$ such that $\tau \in U$,
$\gamma \in V$, and $U \cap V = \emptyset$.  Use the case of the Theorem established above twice, with $Y$ replaced by $U^c$ and $V^c$,
to obtain positive elements $b,c$ of $A$ with the following properties:
\[
d_\nu(c) = 1, \ \forall \nu \in U, \ \ \mathrm{and} \ \ d_\nu(c)< \delta, \ \forall \nu \in X \backslash U;
\]
\[
d_\nu(b) = 1, \ \forall \nu \in V, \ \ \mathrm{and} \ \ d_\nu(b)< \delta, \ \forall \nu \in X \backslash V.
\]
 It follows by Lemma \ref{almostconstant} applied to the element $b\in A$ and the compact set $\{\gamma\}$,
with $k=1$, $\beta=1$ and $\alpha=d_\gamma(c)/2>0$,
that there is $\eta>0$ such that $d_\gamma(1-f_\eta(b))  < 2\alpha=d_\gamma(c)$, where  $f_\eta$ is the
function given in \eqref{feta}.

Since $d_\nu(b) = d_\nu(f_\eta(b))$ for any $\nu \in \mathrm{T}(A)$, we may replace $b$ with $f_\eta(b)$ and hence
arrange that
\begin{equation}\label{bkappa}
d_\gamma(1-b)<d_\tau(c).
\end{equation}

Set $a = b^{1/2}cb^{1/2}$.  Since $a \precsim c$, we have $d_\nu(a) < \delta$ for every $\nu \in X \backslash U$.
Also, $a  \approx c^{1/2} b c^{1/2} \precsim b$, so $d_\nu(a)<\delta$ for every $\nu \in X \backslash V$.
Since $U$ and $V$ are disjoint we conclude that $d_\nu(a) < \delta$ for every $\nu \in X$.  It remains to prove that
$a \neq 0$. To this purpose we show that $d_\gamma(a) > 0$. By Lemma~\ref{compression} and  (\ref{bkappa}) above we have
\[
d_\gamma(a) = d_\gamma(b^{1/2}c \,b^{1/2})\geq d_\gamma(c) - d_\gamma(1-b) >0.
\]
\end{proof}

\begin{lms}\label{cutoff}
Let $A$ be a unital simple separable infinite-dimensional C$^*$-algebra.  Suppose that the extreme boundary $X$ of $\mathrm{T}(A)$
is compact and nonempty.  Let $a \in A$ be positive, and let there be given an open subset $U$ of $X$ and $\delta>0$.  It follows that there is a
positive element $b$ of $A \otimes \mathcal{K}$ with the following properties:
\[
d_\tau(b) = d_\tau(a), \ \forall \tau \in U \ \ \mathrm{and} \ \ d_\tau(b) \leq \delta, \ \forall \tau \in X \backslash U.
\]
\end{lms}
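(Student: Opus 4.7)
The plan is to construct $b$ explicitly as the sandwich $b := e^{1/2} a e^{1/2}$, where $e$ is a localizing element obtained from Lemma~\ref{almostdelta1}. First I would apply that lemma with $Y := X \setminus U$ (closed in $X$ because $U$ is open in $X$) to produce a nonzero positive $e \in A$ satisfying $d_\tau(e) < \delta$ on $X \setminus U$ and $d_\tau(e) = 1$ on $U$. The upper bound $d_\tau(b) \leq d_\tau(e) < \delta$ on $X \setminus U$ is then immediate: Cuntz--Pedersen equivalence gives $b \sim a^{1/2} e a^{1/2}$, and the constant sequence $v_n = a^{1/2}$ witnesses $a^{1/2} e a^{1/2} \precsim e$; symmetrically, $b \precsim a$ so $d_\tau(b) \leq d_\tau(a)$ globally.

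The substantive remaining step is the reverse inequality $d_\tau(b) \geq d_\tau(a)$ on $U$. I would extract this from Lemma~\ref{compression} by a limiting argument with the cutoffs $f_\eta$ of \eqref{feta}. The key observation is that $f_\eta(e)$ factors as $f_\eta(e) = r e r$ with $r := g(e) \in A$, where $g(t) := f_\eta(t)^{1/2}/t^{1/2}$ extends to a continuous function on $[0,\infty)$ (equal to $1/\sqrt{\eta}$ on $[0,\eta]$ and to $1/\sqrt{t}$ for $t \geq \eta$), and in particular $r$ commutes with $e$. Two Cuntz--Pedersen moves (first via $a^{1/2} f_\eta(e)^{1/2}$, then via $e^{1/2} r a^{1/2}$) identify $f_\eta(e)^{1/2} a f_\eta(e)^{1/2}$ with $r b r$ up to Cuntz equivalence, and finally $r b r \precsim b$ is witnessed by the constant sequence $v_n = r$. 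This gives the Cuntz-semigroup inequality $\langle f_\eta(e)^{1/2} a f_\eta(e)^{1/2} \rangle \leq \langle b \rangle$ and hence $d_\tau(f_\eta(e)^{1/2} a f_\eta(e)^{1/2}) \leq d_\tau(b)$.

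On the other hand, Lemma~\ref{compression} applied with $y := f_\eta(e)$ and $x := a$ yields
\[
d_\tau\bigl(f_\eta(e)^{1/2} a f_\eta(e)^{1/2}\bigr) \geq d_\tau(a) - d_\tau\bigl(1 - f_\eta(e)\bigr).
\]
Combining the two estimates gives $d_\tau(b) \geq d_\tau(a) - d_\tau(1 - f_\eta(e))$ for every $\eta > 0$. The concluding step is to let $\eta \to 0$ with $\tau \in U$ fixed. Writing $\mu_\tau$ for the spectral measure of $e$ induced by $\tau$, one has $d_\tau(1 - f_\eta(e)) = \mu_\tau([0,\eta) \cap \sigma(e))$; the hypothesis $d_\tau(e) = 1$ is precisely the statement $\mu_\tau(\{0\} \cap \sigma(e)) = 0$, so continuity of the measure from above along $[0,\eta) \searrow \{0\}$ gives $\mu_\tau([0,\eta) \cap \sigma(e)) \to 0$ pointwise on $U$. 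Hence $d_\tau(b) \geq d_\tau(a)$ on $U$, completing the required equality.

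The main obstacle is the Cuntz-class inequality $\langle f_\eta(e)^{1/2} a f_\eta(e)^{1/2} \rangle \leq \langle b \rangle$: a direct application of Lemma~\ref{compression} to $b = e^{1/2} a e^{1/2}$ yields only the unusable bound $d_\tau(b) \geq d_\tau(a) - d_\tau(1 - e)$, and $d_\tau(1 - e)$ is typically not small on $U$ (the construction of $e$ in Lemma~\ref{almostdelta1} gives $\|e\| \leq 1/2$, so $1 - e$ is invertible). Routing the estimate through the cutoff $f_\eta(e)$, whose ``almost-full-support" behavior on $U$ is precisely encoded by $\mu_\tau([0,\eta)) \to 0$, and then transferring back to $b$ via the Cuntz--Pedersen manipulations is what makes the argument go through.
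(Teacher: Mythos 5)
Your proof is correct, and it takes a genuinely different route from the paper's. Both arguments start the same way, using Lemma~\ref{almostdelta1} with $Y=X\setminus U$ to obtain the localizing element, and both lean on Lemma~\ref{compression} for the lower bound; but the paper does not work with the single sandwich $e^{1/2}ae^{1/2}$. Instead it exhausts $U$ by closed sets $\overline{V_i}$, uses Lemma~\ref{almostconstant} to choose cutoffs $h_i=f_{\eta_i}(h)$ with $d_\tau(1-h_i)<1/i$ on $\overline{V_i}$, forms the increasing sequence $a_i=h_i^{1/2}ah_i^{1/2}$ in $\mathrm{Cu}(A)$, and takes $b$ to represent its supremum, invoking \cite{cei} and the supremum-preservation of $d_\tau$; the resulting $b$ lives only in $A\otimes\mathcal{K}$ and is not given by an explicit formula. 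You instead fix $b=e^{1/2}ae^{1/2}\in A$ once and for all and prove $\langle f_\eta(e)^{1/2}af_\eta(e)^{1/2}\rangle\le\langle b\rangle$ for every $\eta$ via the commutation trick $f_\eta(e)=g(e)\,e\,g(e)$ (one could shortcut this step further by noting $a^{1/2}f_\eta(e)a^{1/2}\le\eta^{-1}a^{1/2}ea^{1/2}$, which already gives the Cuntz subequivalence), then pass to the limit $\eta\to 0$ using $d_\tau(1-f_\eta(e))=\mu_\tau([0,\eta)\cap\sigma(e))\to\mu_\tau(\{0\})=1-d_\tau(e)=0$ for $\tau\in U$. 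Your version buys a concrete element of $A$ itself and avoids the supremum machinery of the Cuntz semigroup entirely, at the cost of a slightly more delicate algebraic identity; the paper's version is more uniform with the surrounding lemmas (it reuses Lemma~\ref{almostconstant} and the $V_i$-exhaustion pattern) but genuinely needs the completeness of $\mathrm{Cu}(A)$ under increasing sequences. Your side remark about why Lemma~\ref{compression} applied directly to $e$ is useless (since $\|e\|\le 1/2$ forces $d_\tau(1-e)=1$) is also accurate and correctly identifies the obstruction that both proofs must route around.
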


\begin{proof}
 Use Lemma \ref{almostdelta1} to find a positive element $h$ of $A$ with the property that
\begin{equation}\label{hone}
d_\tau(h) < \delta, \ \forall \tau \in X \backslash U \ \ \mathrm{and} \ \ d_\tau(h)=1, \ \forall \tau \in U.
\end{equation}
Let $V_1 \subseteq V_2 \subseteq V_3 \subseteq \cdots$ be a sequence of open subsets of $X$ such that $\overline{V}_i
\subseteq U$ for each $i$ and $\cup_{i=1}^\infty V_i = U$.
Trivially,
\begin{equation}\label{htwo}
1-1/2i < d_\tau(h) \leq 1, \ \forall \tau \in \overline{V_i},
\end{equation}
 and so Lemma \ref{almostconstant} applied for $k=\beta=1$ and $\alpha=1/2i$ yields $\eta_i>0$ such that
\begin{equation}\label{hthree}
d_\tau(1-f_{\eta_i}(h))  < 1/i, \ \forall \tau \in \overline{V}_i.
\end{equation}
To simplify notation in the remainder of the proof, let us simply re-label $f_{\eta_i}(h)$ as $h_i$. We may assume that the sequence $(\eta_i)$ is decreasing  so that
the sequence $(h_i)$ is increasing.
Since $d_\tau(h) = d_\tau(f_\eta(h))$ for any $\tau \in \mathrm{T}(A)$ and $\eta>0$, it follows from \eqref{hone} that
\begin{equation}\label{hone-i}
d_\tau(h_i) < \delta, \ \forall \tau \in X \backslash U \ \ \mathrm{and} \ \ d_\tau(h_i)=1, \ \forall \tau \in U.
\end{equation}

Set $a_i:= h_i^{1/2}ah_i^{1/2}$.  Since $a_i \precsim a$, we have
\[
d_\tau(a_i) \leq d_\tau(a), \ \forall \tau \in U.
\]
Also, since $h_i^{1/2}ah_i^{1/2} \approx a^{1/2} h_i a^{1/2} \precsim h_i$, we have
\[
d_{\tau}(a_i)  \leq d_{\tau}(h_i) < \delta, \forall \tau \in X \backslash U.
\]
For our lower bound, we observe that by Lemma~\ref{compression} and (\ref{hthree}) we have
 for any $\tau \in \overline{V}_i$:
\begin{equation*}
d_{\tau}(a_i) =d_{\tau}(h_i^{1/2}ah_i^{1/2})\geq d_{\tau}(a) - d_{\tau}(1-h_i) > d_\tau(a) - 1/i.
\end{equation*}
Therefore we have
\begin{equation}\label{final}
d_\tau(a_{i}) < \delta, \ \forall \tau \in X \backslash U \ \ \mathrm{and} \ \ d_\tau(a) -1/i < d_\tau(a_{i}) < d_\tau(a),
\ \forall \tau \in \overline{V_i}.
\end{equation}
Since $h_{i} \leq h_{i+1}$ and
\[
a_{i} = h_{i}^{1/2} d h_{i}^{1/2} \precsim h_{i+1}^{1/2} d h_{i+1}^{1/2} = a_{i+1}.
\]
The increasing sequence $(\langle a_{i} \rangle )_{i=1}^{\infty}$ has a supremum $y$, where $y = \langle b \rangle$ for some positive element $b$ of $A \otimes \mathcal{K}$ by \cite{cei}.  Since each $d_\tau$ is a supremum preserving state on
$\mathrm{Cu}(A)$, we conclude from (\ref{final}) that
\[
d_\tau(b) \leq \delta, \ \forall \tau \in X \backslash U \ \ \mathrm{and} \ \ d_\tau(b) = d_\tau(a), \ \forall \tau \in U,
\]
as desired.

%Finally, suppose that $\beta, \delta < 1$, so that $d_\tau(a) < 1$ for each $\tau \in \mathrm{T}(A)$.  By strict comparison, $a \precsim
%1_A$, and so for any $\epsilon > 0$ there is $a_\epsilon \in A$ such that $a_\epsilon \approx (a-\epsilon)_+$;  in particular,
%$d_\tau((a-\epsilon)_+) = d_\tau(a_\epsilon)$ for each $\tau$.  Let $V \subseteq \overline{V} \subseteq U$ be an open neighbourhood
%of $x$ in $X$.  By Lemma \ref{sup1}, we may assume that
%\[
%d_\tau(a_\epsilon) = d_\tau((a-\epsilon)_+) > \alpha, \ \forall \tau \in V \subseteq \overline{V}.
%\]
%On the other hand, $d_\tau(a_\epsilon) \leq d_\tau(a)$, so
%d_\tau(a_\epsilon) < \beta$ for

%{\bf We will have to take sups after the cutting process--should we introduce $V$ smaller than $U$ here, to have something that works in a non-strict-comparison setting?  Or should we ask for strict comparison so that we can take sups?}
\end{proof}

%\begin{rems}
%{\rm Note that we can replace $U$ with any open $V \subseteq U$ satisfying $x \in V$ in the conclusion of Lemma \ref{almostdelta2}.}
%\end{rems}

 \begin{lms}\label{almostmax}
Let $A$ be a unital simple separable C$^*$-algebra with strict comparison of positive elements and at least one bounded trace.  Suppose that $X = \partial_e \mathrm{T}(A)$ is compact, and let $a,b \in A$ be positive.  Suppose that there are $0<\alpha< \beta  <\gamma \leq 1$ and open sets $U,V \subseteq X$ with the property that
\[
\alpha < d_\tau(a) < \beta, \ \forall \tau \in U \ \ \mathrm{and} \ \ \beta < d_\tau(b) < \gamma, \ \forall \tau \in V.
\]
It follows that for any closed set $K \subset U \cup V$, there is a positive element $c$ of $M_2(A)$ with the property that
\[
\alpha < d_\tau(c) < \gamma, \ \forall \tau \in K.
\]
\end{lms}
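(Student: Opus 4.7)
The strategy rests on a simple topological refinement of the cover: the sets $W_a := U$ and $W_b := V \setminus U$ are \emph{disjoint} open subsets of $X$ that still cover $K$ (since $K \setminus U \subseteq V$). I will use Lemma~\ref{cutoff} to manufacture $a_1, b_1 \in (A \otimes \mathcal{K})_+$ whose rank functions imitate $d_\tau(a)$ on $W_a$ and a slight truncation of $d_\tau(b)$ on $W_b$, while being small outside; then $c := a_1 \oplus b_1$ will have its rank function controlled on $K$, because the disjointness of $W_a$ and $W_b$ prevents any double counting between the two summands.

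In preparation, I convert the open inequality $d_\tau(b) < \gamma$ on $V$ into a uniform strict bound on $K_2 := K \setminus U$, which is closed in $T(A)$ (as $X$ is closed in $T(A)$) and contained in $V$. By Lemma~\ref{sup1}, some $\eta > 0$ gives $\beta < d_\tau(\hat b) < \gamma$ on a neighborhood of $K_2$, where $\hat b := (b-\eta)_+$. Combining $d_\tau(\hat b) \leq \mu_\tau([\eta,\infty) \cap \sigma(b))$ with the upper semicontinuity of the right-hand side in $\tau$ (Portmanteau) and the estimate $\mu_\tau([\eta,\infty) \cap \sigma(b)) \leq d_\tau(b) < \gamma$ on $V$, compactness of $K_2$ furnishes $\gamma' < \gamma$ with $d_\tau(\hat b) \leq \gamma'$ throughout $K_2$. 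Choose $\delta_a \in (0, \min(\beta, \gamma - \gamma'))$ and $\delta_b \in (0, \gamma - \beta)$, and apply Lemma~\ref{cutoff} to $a$ with open set $W_a$ and tolerance $\delta_a$, and to $\hat b$ with open set $W_b$ and tolerance $\delta_b$, obtaining $a_1, b_1 \in (A \otimes \mathcal{K})_+$ with $d_\tau(a_1) = d_\tau(a)$ on $W_a$ and $d_\tau(a_1) \leq \delta_a$ on $X \setminus W_a$, and analogously for $b_1$. Set $c := a_1 \oplus b_1$. For $\tau \in K \cap U = K \cap W_a$, disjointness of $W_a$ and $W_b$ puts $\tau$ outside $W_b$, so $d_\tau(c) = d_\tau(a) + d_\tau(b_1) \in (\alpha, \beta + \delta_b) \subseteq (\alpha, \gamma)$; for $\tau \in K_2 = K \cap W_b$, symmetrically $d_\tau(c) = d_\tau(a_1) + d_\tau(\hat b) \in (\beta, \delta_a + \gamma'] \subseteq (\alpha, \gamma)$.

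It remains to realize $c$ in $M_2(A)$. On $X$ one has $d_\sigma(a_1) \leq \beta$ and $d_\sigma(b_1) \leq \gamma$; since the functions $\tau \mapsto d_\tau(a_1)$ and $\tau \mapsto d_\tau(b_1)$ are affine and lower semicontinuous on the Bauer simplex $T(A)$, representing any $\tau \in T(A)$ as the barycenter of a probability measure supported on $X$ and invoking monotone convergence propagates both bounds to all of $T(A)$, yielding $d_\tau(c) \leq \beta + \gamma < 2$. Strict comparison then gives $c \precsim 1_A \oplus 1_A$, and Lemma~\ref{sup1} supplies $\epsilon > 0$ with $\alpha < d_\tau((c-\epsilon)_+) < \gamma$ on $K$. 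Writing $(c-\epsilon)_+ = v (1_A \oplus 1_A) v^*$ for some $v \in A \otimes \mathcal{K}$ and taking $y := v(1_A \oplus 1_A)$, the element $y^* y = (1_A \oplus 1_A) v^* v (1_A \oplus 1_A) \in M_2(A)$ is Cuntz-Pedersen equivalent to $(c - \epsilon)_+$ and so has the same rank function; this furnishes the desired element of $M_2(A)$. The principal obstacle is the disjoint topological refinement $\{W_a, W_b\}$ of the cover together with the compactness-driven sharpening $d_\tau(\hat b) \leq \gamma' < \gamma$ on $K_2$, both essential for keeping $d_\tau(a_1) + d_\tau(b_1)$ strictly below $\gamma$ on all of $K$.
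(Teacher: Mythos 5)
Your strategy hinges on the claim that $W_a := U$ and $W_b := V\setminus U$ are \emph{disjoint open} sets covering $K$, so that the two summands of $c = a_1\oplus b_1$ never contribute simultaneously at a point of $K$. This is where the proof breaks: $V\setminus U = V\cap U^{c}$ is the intersection of an open set with a closed set and is not open in general, so Lemma \ref{cutoff} (which requires an open set) cannot be applied to $\hat b$ with the set $W_b$. More importantly, the defect cannot be repaired by shrinking or enlarging: any open set containing $K\setminus U$ must meet $U$ whenever $K\setminus U$ contains a point of $\partial U$ (take $X=[0,1]$, $U=[0,1/2)$, $V=(1/4,1]$, $K=X$; the point $1/2$ lies in $K\setminus U$ but every neighbourhood of it meets $U$). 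In the application of this lemma inside Theorem \ref{finextreme} (with $U=K_k^{\circ}$ and $V=\mathcal{W}_{k+1}$) such boundary points are the generic situation, so they cannot be wished away.

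The failure is intrinsic to the direct-sum strategy, not merely to the misapplied lemma: $\tau\mapsto d_\tau(b_1)$ is lower semicontinuous, so if $d_{\tau_0}(b_1)>\beta$ at a point $\tau_0\in K\setminus U$ lying in $\overline{K\cap U}$ (which you need for the lower bound on $K_2$), then $d_\tau(b_1)>\beta$ persists on a neighbourhood of $\tau_0$, hence at nearby points $\tau\in K\cap U$, where $d_\tau(c)=d_\tau(a)+d_\tau(b_1)>\alpha+\beta$; since nothing forces $\alpha+\beta<\gamma$ (e.g.\ $\alpha=0.4$, $\beta=0.5$, $\gamma=0.6$), the required upper bound $d_\tau(c)<\gamma$ fails on $K$. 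The paper's proof avoids this double-counting problem by not taking a direct sum: it builds $y=(b-\delta)_+\oplus z$ with $\beta<d_\tau(y)<\gamma$ on a closed set $F\subseteq V$, transports $(a-\epsilon)_+$ into the hereditary subalgebra $\overline{yM_2(A)y}$ using strict comparison, and defines $c=x+y^{1/2}hy^{1/2}$ \emph{inside} that hereditary subalgebra, so that on the overlap region the rank of $c$ is automatically capped by $d_\tau(y)<\gamma$ regardless of how the two pieces interact. Some device of this kind is needed; your final paragraph on realizing $c$ in $M_2(A)$ via strict comparison against $1_A\oplus 1_A$ is essentially sound, but the core construction does not go through.
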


 \begin{proof}

Since the topology on $X$ is metrizable and $K$ is compact there exist closed subsets
$E \subseteq U$ and $F \subseteq V$ such that
 $K \subseteq E^\circ \cup F^ \circ \subseteq E \cup F$  and $E$ and $F$ are the closures of their interiors.

By Lemma~\ref{cutoff} and strict comparison we may assume that
 $d_\tau(a)<\beta$ for all $\tau \in X\setminus U$.
 %and  $d_\tau(b)<\gamma$ for all $\tau \in X\setminus V$.
Apply Lemma \ref{sup1} to $a$ to find an $\epsilon>0$ such that $d_\tau((a-\epsilon)_+) > \alpha$ for each $\tau \in E$.  Likewise find $\delta > 0$ such that $d_\tau((b-\delta)_+) > \beta$ for each $\tau \in F$.
For   $\tau \in X$ we have
 \[
  d_\tau((b-\delta)_+) \leq \mu_\tau([\delta,\infty) \cap \sigma(b)) \leq d_\tau(b) < \gamma.
 \]
 The map $\tau \mapsto \mu_\tau([\delta,\infty) \cap \sigma(b))$ is upper semicontinuous on the compact set $X$, and so attains a
 maximum $\gamma_0 < \gamma$.  Use Lemma \ref{almostdelta1} to find a positive element $z \in A$ with the property that
 \[
 d_\tau(z) < \gamma-\gamma_0, \ \forall \tau \in F \ \ \mathrm{and} \ \ d_\tau(z) = 1, \ \forall \tau \in X \backslash F.
 \]
 Set $y = (b-\delta)_+ \oplus z \in \mathrm{M}_2(A)$, so that
 \[
 \beta < d_\tau(y) <\gamma, \ \forall \tau \in F \ \ \mathrm{and} \ \ d_\tau(y) \geq  \beta, \forall \tau \in X \backslash F.
 \]
 It follows that $d_\tau(y) > d_\tau(a)$ for every $\tau \in X$, whence $a \precsim y$ by strict comparison.  Using this we
 can find   $v \in \mathrm{M}_2(A)$ such that $ (a-\epsilon)_+=vyv^* $.  Set $x = y^{1/2}
 v^*v\,y^{1/2}$, so that $x \approx (a-\epsilon)_+$ and $x \in \overline{yM_2(A)y}$.
Moreover, $d_\tau(x)<\beta$ for all $\tau \in X$.

Choose $0<\lambda <\min\{\beta-\alpha, \gamma-\beta \}$.
Use Lemma \ref{almostdelta1} to find a positive element $h$  in $\mathrm{M}_2(A)$ with the following property:
\[
d_\tau(h) = 2, \ \forall \tau \in F^\circ \ \ \mathrm{and} \ \ d_\tau(h) < \lambda, \ \forall \tau \in X \backslash F^\circ.
\]

 We can find closed sets $E_1 \subseteq E^\circ$ and $F_1 \subseteq F^\circ$ such that $K \subseteq E_1 \cup F_1$.
Replacing $h$  with $f_\eta(h)$,  for sufficiently small $\eta$, we may arrange by
Lemma \ref{almostconstant} applied with $k=\beta=2$ and $\alpha=\lambda/2$,  that
\[
d_\tau(1_{2} - h) < \lambda, \ \forall \tau \in F_1.
\]

We define
\[
c = x + y^{1/2} h y^{1/2}.
\]
Let us prove that this definition yields the inequality required by the Lemma.  Let $\tau \in K$ be given.  If
$\tau \in E_1$, then
\[
d_\tau(c) \geq d_\tau(x) =d_\tau((a-\epsilon)_+) > \alpha.
\]
If $\tau \in F_1$, then, by Lemma \ref{compression}, we have
\[
d_\tau(c) \geq d_\tau(y^{1/2} h y^{1/2}) = d_\tau(h^{1/2} y h^{1/2}) \geq d_\tau(y) - d_\tau(1_2 - h) > \beta -\lambda > \alpha.
\]
Thus, $d_\tau(c) > \alpha$ for each $\tau \in K$.  For the upper bound, observe first that $c \in \overline{yM_2(A)y}$, whence
$d_\tau(c) \leq d_\tau(y)$ for every $\tau \in X$.  In particular,
\[
d_\tau(c) < \gamma, \ \forall \tau \in F.
\]
If $\tau \in K \backslash F$ then $\tau \in  X \backslash F^\circ$.  It follows that
\[
d_\tau(c) \leq d_\tau(x) + d_\tau(y^{1/2} h y^{1/2}) \leq d_\tau(x) + d_\tau(h)
< \beta + \lambda < \gamma.
\]
 \end{proof}

\section{The proof of Theorem \ref{main}}\label{mainproof}

For clarity of exposition we separate Theorem \ref{main} into three parts, considering conditions (iii), (ii), and (i) in order.  The idea for the proof of the Theorem \ref{almostdiv} is contained in \cite{BPT}.

\begin{thms}\label{almostdiv}
Let $A$ be a unital simple separable C$^*$-algebra with nonempty tracial state space.  Suppose that for each positive $b \in A \otimes \mathcal{K}$ such that $d_\tau(b) < \infty, \ \forall \tau \in \mathrm{T}(A)$ and $\delta>0$, there is a positive $c \in A \otimes \mathcal{K}$ such that
\begin{equation}\label{halfrank}
|2 d_\tau(c) - d_\tau(b)| < \delta, \ \forall \tau \in \mathrm{T}(A).
\end{equation}
It follows that for any $f \in \mathrm{Aff}(\mathrm{T}(A))$ and $\epsilon>0$ there is positive $h \in A \otimes \mathcal{K}$ with the property that 
\[
|d_\tau(h) - f(\tau)| < \epsilon, \forall \tau \in \mathrm{T}(A).
\]
If $A$ moreover has strict comparison, then for any $f \in \mathrm{SAff}(\mathrm{T}(A))$ there is positive $h \in A \otimes \mathcal{K}$ such that $d_\tau(h) = f(\tau)$ for each $\tau \in \mathrm{T}(A)$.
\end{thms}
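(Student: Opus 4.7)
The proof naturally splits into two parts mirroring the two conclusions. The first, which shows that every $f \in \mathrm{Aff}(\mathrm{T}(A))$ is uniformly approximated by a rank function, combines the almost-divisibility hypothesis with Theorem~\ref{hitf}. The second, which strengthens this to exact realisation of every $f \in \mathrm{SAff}(\mathrm{T}(A))$ under strict comparison, is then a clean supremum argument in $\mathrm{Cu}(A)$.

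\textbf{Affine approximation.} Given $f \in \mathrm{Aff}(\mathrm{T}(A))$, I may assume $f$ is strictly positive (absorbing a shift by $\epsilon/2$ into $\epsilon$). Two building blocks are available. First, iterating the halving hypothesis $n$ times starting from $1_A$, with suitably shrinking tolerances at each stage, yields $c_n \in (A \otimes \mathcal{K})_+$ with $|d_\tau(c_n) - 2^{-n}|$ arbitrarily small uniformly in $\tau$; direct sums then realise rank functions uniformly close to any prescribed dyadic constant. Second, Theorem~\ref{hitf} produces $a \in (A \otimes \mathcal{K})_+$ with $\tau(a) = f(\tau)$ for every $\tau$, capturing the $\tau$-dependence of $f$ at the level of the trace. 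This alone is insufficient because one has only $d_\tau(a) \geq f(\tau)$, with the inequality $\tau(a) \leq \|a\|\,d_\tau(a)$ typically strict. Following the strategy of \cite{BPT}, I would now build $h$ by an iterative scheme that alternates halving (to control the excess in $d_\tau$) with fresh applications of Theorem~\ref{hitf} to a residual affine function absorbing the current error, choosing the tolerances at stage $k$ of order $2^{-k}$ so that the dyadic errors telescope to below $\epsilon$. The principal technical obstacle is exactly the gap between trace and rank: because $\tau(a)$ does not sharply bound $d_\tau(a)$ on a generic positive element, the iteration must be organised so that every affine residual can be rank-matched on the next round, and it is precisely the almost-divisibility hypothesis that keeps this bookkeeping feasible.

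\textbf{Upgrade to $\mathrm{SAff}$ under strict comparison.} Assume now $A$ has strict comparison and let $f \in \mathrm{SAff}(\mathrm{T}(A))$. By definition, $f = \sup_n f_n$ with $(f_n) \subset \mathrm{LAff}(\mathrm{T}(A))$ increasing, and a standard fact for Choquet simplices writes each $f_n$ as an increasing pointwise supremum of elements of $\mathrm{Aff}(\mathrm{T}(A))^+$; a diagonal argument therefore produces an increasing sequence $(g_k) \subset \mathrm{Aff}(\mathrm{T}(A))^+$ with $\sup_k g_k = f$ pointwise. Applying the affine approximation just established, with a mild downward perturbation of $g_k$ and tolerance $2^{-k}$, I obtain $h_k \in (A \otimes \mathcal{K})_+$ with $d_\tau(h_k) < g_k(\tau)$ for every $\tau$, strictly increasing in $k$, and with $d_\tau(h_k) \to f(\tau)$ pointwise; the strict inequality $d_\tau(h_k) < d_\tau(h_{k+1})$ can be forced by passing to a small spectral cut $(h_k - \eta_k)_+$. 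Strict comparison then gives $h_k \precsim h_{k+1}$ for every $k$, so $(\langle h_k \rangle)$ is an increasing sequence in $\mathrm{Cu}(A)$. By \cite{cei} this sequence admits a supremum $\langle h \rangle$ represented by some $h \in (A \otimes \mathcal{K})_+$, and because $d_\tau$ is supremum-preserving on $\mathrm{Cu}(A)$ (\cite[Lemma~2.3]{bt}), $d_\tau(h) = \sup_k d_\tau(h_k) = f(\tau)$ for every $\tau$, as required. The only delicate point here is arranging the strictly increasing sequence of rank functions, which is exactly where strict comparison is used.
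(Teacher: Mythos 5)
There is a genuine gap in the main (affine approximation) part of your argument. You correctly identify the two building blocks (iterated halving to realise near-constant dyadic rank functions, and Theorem~\ref{hitf} to realise $f$ as a trace $\tau(a)$), and you correctly identify the obstacle: $\tau(a)=f(\tau)$ controls the trace of $a$, not its rank function $d_\tau(a)$. But your proposed resolution --- an iterative scheme ``alternating halving with fresh applications of Theorem~\ref{hitf} to a residual affine function'' --- does not close this gap; it reproduces it. Each application of Theorem~\ref{hitf} to a residual affine function returns a positive element whose \emph{trace} matches the residual, and you are again faced with converting a trace into a rank function, with no mechanism forcing the errors to shrink. The sentence ``the iteration must be organised so that every affine residual can be rank-matched on the next round'' is a restatement of the goal, not a construction. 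The missing idea is the layer-cake (Riemann sum) decomposition of the trace: for the element $a$ with $\tau(a)=f(\tau)$ one has $\tau(a)=\int_0^{\|a\|} d_\tau\bigl((a-t)_+\bigr)\,dt$, so that, taking $g(t)=\sum_{i=1}^{2k+1}(1/2k)\,\chi_{(i/2k,\infty)}(t)$, one gets $|\tau(a)-\sum_{i}(1/2k)\,d_\tau((a-i/2k)_+)|\leq 1/2k$. This expresses $f$, up to $1/2k$, as a finite sum of terms $(1/2k)\,d_\tau(b_i)$ with $b_i=(a-i/2k)_+$ honest positive elements; the halving hypothesis (iterated $n$ times with $2k=2^n$) then produces $h_i$ with $d_\tau(h_i)$ uniformly close to $(1/2k)d_\tau(b_i)$, and $h=\oplus_i h_i$ works. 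No iteration over residuals is needed once the trace has been rewritten as a sum of rank functions.

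Your second part (the upgrade to $\mathrm{SAff}(\mathrm{T}(A))$ under strict comparison) is essentially the argument of \cite[Theorem~2.5]{bt}, which is exactly what the paper cites, and is fine modulo the standard care needed to make the approximating rank functions \emph{strictly} increasing pointwise (since $g_{k+1}-g_k$ need not be bounded below, you should build the strict gaps into the tolerances, e.g.\ by targeting $(1-2^{-k})g_k$ for strictly positive $g_k$, rather than relying on spectral cuts of the $h_k$ alone).
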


\begin{proof}
Let $f \in \mathrm{Aff}(\mathrm{T}(A))$; we may assume $\|f\| = 1$.  It will suffice to consider $\epsilon = 1/k$ for given $k=2^n \in \mathbb{N}$.  Use Theorem \ref{hitf} to find positive $a \in A$ such that $1 \leq \|a\| \leq 1+1/k$ and $\tau(a) = f(\tau), \ \forall \tau \in \mathrm{T}(A)$.  Consider the function 
$g(t)=\sum_{i=1}^{2k+1}  (1/2k)\, \chi_{(i/2k,\infty)}(t)$.

 Using the Borel functional calculus we have $\|a-g(a)\|\leq 1/2k$, and hence
\begin{equation}\label{happrox}
|\tau(a)-\tau(g(a))|=\left| \tau(a) - \sum_{i=1}^{2k+1} (1/2k) \tau\left( \, \chi_{(i/2k,\infty)}(a)\right) \right| \leq \frac{1}{2k} 
\end{equation}
for each $\tau \in \mathrm{T}(A)$.  For each $i$  and $\tau$ we have $d_\tau((a-i/2k)_+) = \tau(  \chi_{(i/2k,\infty)}(a))$.  It is straightforward to see that (\ref{halfrank}) implies the following statement:  for $b$ as in the statement of the Theorem, for any $l \in \mathbb{N}$ and $\delta>0$, there is a positive $c \in A \otimes \mathcal{K}$ such that
\[
|l d_\tau(c) - d_\tau(b)| < \delta, \ \forall \tau \in \mathrm{T}(A).
\]
All that is actually needed is the case when $l$ is a power of two. Apply this with $l = 2k$, $b = (a-i/2k)_+$, and $\delta < 1/(4k+2)$ to obtain positive $h_i \in A \otimes \mathcal{K}$ such that
\[
|d_\tau(h_i) - (1/2k)d_\tau((a-i/2k)_+)| < 1/(4k^2+2k), \ \forall \tau \in \mathrm{T}(A).
\]
Thus, setting $h = \oplus_{i=1}^{2k+1} h_i$, we have
\begin{eqnarray*}
|\tau(g(a))-d_\tau(h)|& =&\left|\sum_{i=1}^{2k+1} (1/2k) \tau\left( \chi_{( i/2k,\infty)}(a)\right) - d_\tau \left( \oplus_{i=1}^{2k+1} h_i \right) \right| \\
& = & \left|\sum_{i=1}^{2k+1} (1/2k)d_\tau((a-i/2k)_+) - d_\tau(h_i) \right| \\
& < & \frac{2k+1}{4k^2+2k} = \frac{1}{2k}
\end{eqnarray*}
Using (\ref{happrox}) we arrive at
\[
| f(\tau) - d_\tau(h) | = | \tau(a) - d_\tau(h) |\leq |\tau(a)-\tau(g(a))|+|\tau(g(a))-d_\tau(h)|
 < \frac{1}{2k} + \frac{1}{2k} = \frac{1}{k} = \epsilon, \ \forall \tau \in \mathrm{T}(A),
\]
as desired.

Now suppose that $A$ has strict comparison.  The final conclusion of the Theorem then follows from the proof of \cite[Theorem 2.5]{bt}, which shows how one produces an arbitrary \mbox{$f \in \mathrm{SAff}(\mathrm{T}(A))$} by taking suprema.
\end{proof}

\begin{thms}\label{surjectequiv}
Let $A$ be a unital simple separable C$^*$-algebra.  Suppose that the extreme boundary $X$ of $\mathrm{T}(A)$ is compact and nonempty, and that for each $m \in \mathbb{N}$ there is $x \in \mathrm{Cu}(A)$ with the property that
\begin{equation}\label{unitdiv}
m d_\tau(x) \leq 1 \leq (m+1)d_\tau(x), \ \forall \tau \in \mathrm{T}(A).
\end{equation}
It follows that for any $f \in \mathrm{Aff}(\mathrm{T}(A))$ and $\epsilon>0$ there is positive $h \in A \otimes \mathcal{K}$ with the property that 
\begin{equation}\label{conc}
|d_\tau(h) - f(\tau)| < \epsilon, \forall \tau \in \mathrm{T}(A).
\end{equation}
If $A$ moreover has strict comparison, then we may take $f \in \mathrm{SAff}(\mathrm{T}(A))$ and arrange that $d_\tau(h) = f(\tau)$ for each $\tau \in \mathrm{T}(A)$.
\end{thms}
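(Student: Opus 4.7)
The plan is to construct $h$ as a direct sum of localizations of the almost-constant-rank element produced by hypothesis (ii), so that the combined rank function realizes the layer-cake decomposition of $f$ on the compact extreme boundary $X := \partial_e \mathrm{T}(A)$. I first reduce to establishing the estimate on $X$: the function $\tau \mapsto d_\tau(h)$ is affine and lower semicontinuous, being the supremum of the increasing sequence of continuous affine functions $\tau \mapsto \tau(h^{1/n})$, while $f$ is affine continuous. For any $\tau \in \mathrm{T}(A)$ represented by a boundary measure $\mu_\tau$ supported on $X$, monotone convergence gives $d_\tau(h) = \int_X d_\rho(h)\,d\mu_\tau(\rho)$. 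The two one-sided estimates, once verified on $X$, therefore propagate to all of $\mathrm{T}(A)$ by integration against $\mu_\tau$.

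Assuming $f \geq 0$ with $M := \|f\|$, fix $m$ with $m > 4(M+1)/\epsilon$ and, by (ii), pick a positive $a_m \in (A \otimes \mathcal{K})_+$ representing $x_m$, so that $1/(m+1) \leq d_\tau(a_m) \leq 1/m$ for every $\tau$. Set $K := \lceil Mm \rceil$, $\delta := \epsilon/(3K)$, and define the nested open sets $V_k := \{\tau \in X : f(\tau) > k/m\}$ for $k = 1, \ldots, K$. For each $k$, Lemma~\ref{cutoff} (whose proof goes through for $a \in A \otimes \mathcal{K}$ without modification) produces $b_k \in (A \otimes \mathcal{K})_+$ with $d_\tau(b_k) = d_\tau(a_m)$ for $\tau \in V_k$ and $d_\tau(b_k) \leq \delta$ for $\tau \in X \setminus V_k$. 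Put $h := \bigoplus_{k=1}^K b_k$.

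For $\tau \in X$, let $j$ be the integer with $f(\tau) \in (j/m, (j+1)/m]$ (taking $j = 0$ if $f(\tau) = 0$). Since $\tau \in V_k$ precisely for $k \leq j$,
\[
\frac{j}{m+1} \leq d_\tau(h) = \sum_{k=1}^{K} d_\tau(b_k) \leq \frac{j}{m} + (K-j)\delta,
\]
so $d_\tau(h) - f(\tau) \leq K\delta < \epsilon/3$ (using $f(\tau) \geq j/m$), while $f(\tau) - d_\tau(h) \leq (j+1)/m - j/(m+1) = (j+m+1)/(m(m+1)) < 2(M+1)/m < \epsilon/2$, as desired.

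For the strict-comparison conclusion, I would follow the final paragraph of the proof of Theorem~\ref{almostdiv}: the density of $\iota$'s range in $\mathrm{Aff}(\mathrm{T}(A))$, combined with \cite[Theorem~2.5]{bt} and the suprema in $\mathrm{Cu}(A)$ from \cite{cei}, allows one to write any $f \in \mathrm{SAff}(\mathrm{T}(A))$ as an increasing supremum of rank functions and to pass to a single $h$ with $d_\tau(h) = f(\tau)$. The principal obstacle in the first part is the two-source error budget — the \emph{interval}-valued rather than pointwise-constant rank of $a_m$ afforded by (ii), and the $K$ accumulated leakage terms from the separate cutoffs — weighed against the $1/m$ step of the layer-cake approximation; this is what forces $m$ to be chosen before $\delta$ and leaves essentially no slack in the estimates.
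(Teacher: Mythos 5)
Your proof is correct and follows essentially the same route as the paper's: a layer-cake decomposition of $f$ into indicator functions of open superlevel sets, each layer realized by applying Lemma~\ref{cutoff} to the almost-constant-rank element supplied by the divisibility hypothesis, with the errors summed over the layers; the only cosmetic difference is that you match the layer height $1/m$ to the rank of a single copy of $a_m$, whereas the paper amplifies $x$ by an integer to hit an arbitrary target rank $r$ before cutting off. Your explicit boundary-measure argument for reducing the estimate to $X$ fills in a step the paper leaves implicit, and the second part is handled identically via \cite[Theorem~2.5]{bt}.
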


\begin{proof}
Let $f$ and $\epsilon$ be given, and assume $\|f \|=1$.  We need only establish (\ref{conc}) over $X$.  We may assume that $\epsilon=1/k$ for some $k \in \mathbb{N}$.  For $i \in \{0,\ldots, 2k-1\}$, let $U_i$ be the open set $\{\tau \in X \ | \ f(\tau) > i/2k \}$.  
We then have 
\begin{equation}\label{eqref:cc1}
 \left|f(\tau) - \sum_{i=0}^{2k-1} (1/2k) \chi_{U_i}(\tau) \right| \leq \frac{1}{2k}.
\end{equation}

Let us now prove the following statement:  given $1> r,\eta, \delta >0$ and an open subset $U$ of $X$,
  there is a positive element $a$ of $A \otimes \mathcal{K}$ with the property that
  \[
r-\eta \leq d_\tau(a) \leq r, \ \forall \tau \in U \ \ \mathrm{and} \ \ d_\tau(a) \leq \delta, \ \forall \tau \in X \backslash U.
\]
Choose $m$ large enough that $r-\eta < k/(m+1) < k/m < r$ for some $k \in \mathbb{N}$. 
By assumption there is $x\in \mathrm{Cu}(A)$  such that $d_\tau(x) \in [1/(m+1),1/m]$ for all $\tau\in \mathrm{T}(A)$. It follows that  $d_\tau(rx) \in [r/(m+1),r/m]$ for all $\tau\in \mathrm{T}(A)$.
 Let $c\in (A \otimes \mathcal{K})_+$ representing $rx \in \mathrm{Cu}(A)$. Then $c$ satisfies
 satisfies
\[
r-\eta < d_\tau(c) < r, \ \forall \tau \in X.
\]
Obtaining the desired element $a$ from $c$ is a straightforward application of Lemma \ref{cutoff}.

Apply the statement proved above with $U = U_i$, $r = 1/2k$, and $\eta=\delta=1/4k^2$ to obtain positive $h_i \in A \otimes \mathcal{K}$ such that 
\[
\frac{1}{2k} -\frac{1}{4k^2} \leq d_\tau(h_i) \leq \frac{1}{2k}, \ \forall \tau \in U_i
\]
and $d_\tau(h_i) < 1/4k^2$ for each $\tau \in X \backslash U_i$.  It is then straightforward to check that 
\begin{equation}\label{eqref:cc11}
\left| \sum_{i=0}^{2k-1} (1/2k) \chi_{U_i}(\tau) - \sum_{i=0}^{2k-1} d_\tau(h_i) \right| < \frac{2k-1}{4k^2}<\frac{1}{k}. 
\end{equation}

It follows from \eqref{eqref:cc1} and \eqref{eqref:cc11} that $h:= \oplus_{i=0}^{2k-1} h_i$ has the required property, since $d_\tau(h) = \sum_{i=0}^{2k-1} d_\tau(h_i)$.  

If $A$ has strict comparison then the final conclusion of the Theorem follows once again from the proof of \cite[Theorem 2.5]{bt}.

 \end{proof}

\begin{rems}
{\rm If $A$ is a unital simple separable C$^*$-algebra with the property that for any $f \in \mathrm{SAff}(\mathrm{T}(A))$ there is positive $h \in A \otimes \mathcal{K}$ such that $f(\tau) = d_\tau(h), \ \forall \tau \in \mathrm{T}(A)$, then both (\ref{halfrank}) and (\ref{unitdiv}) hold. }
\end{rems}

\begin{thms}\label{finextreme}
Let $A$ be a unital simple separable C$^*$-algebra with strict comparison of positive elements.  Suppose further that the extreme
boundary $X$ of $\mathrm{T}(A)$ is nonempty, compact, and of finite covering dimension.  It follows that for each $f \in \mathrm{SAff}(
\mathrm{T}(A))$, there is a positive element $a \in A \otimes \mathcal{K}$ with the property that $d_\tau(a) = f(\tau)$ for each
$\tau \in \mathrm{T}(A)$.
\end{thms}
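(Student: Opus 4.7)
The plan is to verify condition (ii) of Theorem \ref{main} under the current hypotheses and then invoke Theorem \ref{surjectequiv}. That is, for each $m \in \mathbb{N}$ I will produce $x \in \mathrm{Cu}(A)$ with $d_\tau(x) \in [1/(m+1), 1/m]$ for every $\tau \in \mathrm{T}(A)$. Compactness of $X = \partial_e \mathrm{T}(A)$ makes $\mathrm{T}(A)$ a Bauer simplex; since $\tau \mapsto d_\tau(x)$ is lower semicontinuous and affine, any inclusion of its values in $[1/(m+1), 1/m]$ over $X$ propagates to all of $\mathrm{T}(A)$ by integrating against the unique representing measure supported on $X$. Hence it suffices to control the rank on $X$.

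Fix $m$ and set $d = \dim X$. I will partition the open interval $(1/(m+1), 1/m)$ into $d+1$ sub-intervals $(\alpha_0, \beta_0), \ldots, (\alpha_d, \beta_d)$ with $\beta_j < \alpha_{j+1}$, leaving comfortable gaps to absorb the error terms introduced below. For each $\tau \in X$, intersect the $d+1$ neighbourhoods supplied by Lemma \ref{almostdelta2} (one per interval) to obtain a common open neighbourhood $U_\tau$ of $\tau$ and positive elements $a_\tau^0, \ldots, a_\tau^d$ in matrix algebras over $A$ with $\alpha_j < d_\gamma(a_\tau^j) < \beta_j$ for $\gamma \in U_\tau$. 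Then extract a finite subcover $\{U_1, \ldots, U_N\}$ of $X$ with associated elements $a_i^j$.

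Since $X$ is compact metrizable (by separability of $A$) of covering dimension at most $d$, Ostrand's theorem yields a refinement $\mathcal{V} = \mathcal{V}_0 \sqcup \cdots \sqcup \mathcal{V}_d$ of $\{U_1, \ldots, U_N\}$ in which each $\mathcal{V}_j$ is a finite family of pairwise disjoint open sets and $\bigcup_j W_j \supseteq X$, where $W_j := \bigcup \mathcal{V}_j$. For each $V \in \mathcal{V}_j$ fix $i(V)$ with $V \subseteq U_{i(V)}$ and apply Lemma \ref{cutoff} to produce $b_V$ with $d_\tau(b_V) = d_\tau(a_{i(V)}^j)$ for $\tau \in V$ and $d_\tau(b_V) \leq \delta$ for $\tau \in X \setminus V$, choosing $\delta > 0$ so small that $|\mathcal{V}_j|\,\delta$ is negligible relative to each gap $\alpha_{j+1}-\beta_j$. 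Pairwise disjointness within $\mathcal{V}_j$ forces $c_j := \bigoplus_{V \in \mathcal{V}_j} b_V$ to satisfy $d_\tau(c_j) \in (\alpha_j, \beta_j + |\mathcal{V}_j|\,\delta)$ on $W_j$ (only one summand contributes significantly at each point) and $d_\tau(c_j) \leq |\mathcal{V}_j|\,\delta$ off $W_j$.

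The concluding step is an iterated merge via Lemma \ref{almostmax}. Fix a closed shrinking $F_0, \ldots, F_d$ with $F_j \subseteq W_j$ and $\bigcup_j F_j = X$, available by normality. Set $e_0 := c_0$; inductively, given $e_j$ with rank in $(\alpha_0, \beta_j + \text{accumulated slack})$ on $F_0 \cup \cdots \cup F_j$, apply Lemma \ref{sup1} to find $\eta_j > 0$ and an open neighbourhood $U'_j$ of this closed set on which $(e_j - \eta_j)_+$ maintains the same rank interval, then apply Lemma \ref{almostmax} to the pair $((e_j - \eta_j)_+, c_{j+1})$ on the closed set $F_0 \cup \cdots \cup F_{j+1} \subseteq U'_j \cup W_{j+1}$, with separating level chosen from the gap $(\beta_j + \text{slack}, \alpha_{j+1})$. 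After $d$ merges the final element $e_d$ has rank in $(\alpha_0, \beta_d + \text{total slack}) \subset (1/(m+1), 1/m)$ on all of $X$, verifying (ii). The main obstacle is the combinatorial bookkeeping: the sub-intervals, the cutoff $\delta$, and the truncation levels $\eta_j$ must all be fixed a priori so that each small error introduced by Lemmas \ref{cutoff}, \ref{sup1}, and \ref{almostmax} remains within its allotted buffer. With (ii) established, Theorem \ref{surjectequiv} delivers for each $f \in \mathrm{SAff}(\mathrm{T}(A))$ a positive $a \in A \otimes \mathcal{K}$ with $d_\tau(a) = f(\tau)$ for every $\tau$.
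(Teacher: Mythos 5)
Your proposal is correct and follows essentially the same route as the paper's proof: local almost-constant-rank elements from Lemma \ref{almostdelta2}, a coloured disjoint refinement of a finite subcover using $\dim X = d$, localization via Lemma \ref{cutoff}, and an iterated merge of the colour classes via Lemma \ref{almostmax}. The only cosmetic difference is that you package the resulting element as a witness for condition (ii) of Theorem \ref{main} and then cite Theorem \ref{surjectequiv}, whereas the paper constructs elements of rank in $(r-\epsilon,r)$ for arbitrary $r,\epsilon$ and argues as in the proof of that theorem.
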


\begin{proof}

By arguing as in the proof of Theorem \ref{surjectequiv}, it will suffice to find, for each $1> r,\epsilon >0$, a positive element $a$ in some $\mathrm{M}_N(A)$
with the property that
\[
r-\epsilon < d_\tau(a) < r, \ \forall \tau \in X.
\]

Set $d:=\mathrm{dim}(X)$, and for $i \in \{0,\ldots,2d+2\}$ define
\[
r_i = r - \frac{(2d+2-i)\epsilon}{2d+2}.
\]
Fix $\tau \in X$.  Use
Lemma \ref{almostdelta2} to find,
for each $k \in \{0,1,\ldots,d\}$, a positive element $\tilde{b}_k \in \mathrm{M}_N(A)$ and an open neighbourhood $V_k$ of $\tau$ in $X$ with the
property that
\begin{equation}\label{bkvalues}
r_{2k} < d_\gamma(\tilde{b}_k) < r_{2k+1}, \ \forall \gamma \in V_k.
\end{equation}
Set $U_\tau = \cap_k V_k$, so that $\mathcal{U} := \{U_\tau\}_{\tau \in X}$ is an open cover of $X$.  By the finite-dimensionality and
compactness of $X$, there are a refinement of a finite subcover of $\mathcal{U}$, say $\mathcal{W} = \{W_1,\ldots,W_n\}$, and a map
$c:\mathcal{W} \to \{0,1,\ldots,d\}$ with the property that if $i \neq j$ then
\[
c(W_i) = c(W_j) \Rightarrow W_i \cap W_j = \emptyset.
\]
Each $W_i$ is contained in some $U_\tau$, and so (\ref{bkvalues}) furnishes positive elements $\tilde{b}_k$, $k \in \{0,1,\ldots,d\}$
such that
\[
r_{2k} < d_\gamma(\tilde{b}_k) < r_{2k+1}, \ \forall \gamma \in W_i.
\]
Set $\eta = \epsilon/(n(2d+2))$, and use Lemma \ref{cutoff} to produce positive elements $b_k^{(i)}$ in $A \otimes \mathcal{K}$ with the
following properties:
\[
r_{2k} < d_\gamma(b_k^{(i)}) < r_{2k+1}, \ \forall \gamma \in W_i \ \ \mathrm{and} \ \
d_\gamma < \eta, \ \forall \gamma \in X \backslash W_i.
\]
Now for each $k \in \{0,1,\ldots,d\}$ define
\[
b_k = \bigoplus_{ \{i \ | \ c(W_i)=k\}} b_k^{(i)} \in A \otimes \mathcal{K}.
\]
The $W_i$s appearing in the sum above are mutually disjoint.  Suppose that $\tau \in W_s$
and $c(W_s) = k$.  We have the following bounds:
\begin{eqnarray*}
d_\tau(b_k) & = & \sum_{\{i \ | \ c(W_i)=k\}} d_\tau(b_k^{(i)}) \\
& = & d_\tau(b_k^{(s)}) + \sum_{ \{i \ | \ c(W_i)=k, \ i \neq s\}} d_\tau(b_k^{(i)}) \\
& < & r_{2k+1} + n\eta \\
& = & r_{2k+1} + \epsilon/(2d+2) \\
& = & r_{2k+2}
\end{eqnarray*}
and
\[
d_\tau(b_k) > d_\tau(b_k^{(s)}) > r_{2k}.
\]
For each $k \in \{0,1,\ldots,d\}$ we define
\[
\mathcal{W}_k = \bigcup_{\{i \ | \ c(W_i)=k\}} W_i,
\]
so that
\[
r_{2k} < d_\tau(b_k) < r_{2k+2}, \ \forall \tau \in \mathcal{W}_k.
\]
Note that $\mathcal{W}_0,\mathcal{W}_1,\ldots,\mathcal{W}_d$ is a cover of $X$.

To complete the proof of the Theorem we proceed by induction.  First observe that since $X$ is compact and metrizable, we may find a closed
subset $K_0$ of $\mathcal{W}_0$ with the property that $K_0^\circ,\mathcal{W}_1,\ldots,\mathcal{W}_d$ is a cover of $X$.  Set
$c_0 = b_0$, so that
\[
r-\epsilon = r_0 < d_\tau(c_0) < r_{2}, \ \forall \tau \in K_0.
\]
Now suppose that we have found a closed set $K_k \subseteq \mathcal{W}_0 \cup \cdots \cup \mathcal{W}_k$, $k < d$, such that
$K_k^\circ,\mathcal{W}_{k+1},\ldots,\mathcal{W}_d$ covers $X$, and a positive element $c_k$ in some $\mathrm{M}_N(A)$ with
the property that
\[
r-\epsilon = r_0 < d_\tau(c_k) < r_{2k+2}, \ \forall \tau \in K_k.
\]
Since $X$ is compact and metrizable, we can find a closed set $K_{k+1} \subseteq K_k^\circ \cup \mathcal{W}_{k+1}$ such that $K_{k+1}^\circ,
\mathcal{W}_{k+2},\ldots,\mathcal{W}_d$ covers $X$.  Applying Lemma \ref{almostmax} to $c_k$ and $b_{k+1}$ we obtain a
positive element $c_{k+1}$ in some $\mathrm{M}_N(A)$ with the property that
\[
r-\epsilon = r_0 < d_\tau(c_{k+1}) < r_{2k+4}, \ \forall \tau \in K_{k+1}.
\]
Starting with the base case $k=0$, applying the inductive step above $n$ times, and noting that we must have $K_d=X$, we
arrive at a positive element $c_n$ in some $\mathrm{M}_N(A)$ with the property that
\[
r-\epsilon = r_0 < d_\tau(c_n) < r_{2d+2} = r, \ \forall \tau \in X.
\]
Setting $a = c_n$ completes the proof.

\end{proof}

Theorems \ref{almostdiv}, \ref{surjectequiv}, and \ref{finextreme} together constitute Theorem \ref{main}.

  \section{Applications}\label{apps}
  
  \subsection{The structure of the Cuntz semigroup}\label{Cuntzstructure}  Let $A$ be a unital simple C$^*$-algebra with nonempty tracial state space.  Consider the disjoint union
  \[
V(A) \sqcup \mathrm{SAff}(\mathrm{T}(A)),
 \]
 where $V(A)$ denotes the semigroup of Murray-von\ Neumann equivalence classes of projections in $A \otimes \mathcal{K}$.  Equip this set with an addition operation as follows:
\vspace{2mm}
\begin{enumerate}
\item[(i)] if $x,y \in V(A)$, then their sum is the usual sum in $V(A)$;
\item[(ii)] if $x,y \in \mathrm{SAff}(\mathrm{T}(A))$, then their sum
is the usual (pointwise) sum in $\mathrm{SAff}(\mathrm{T}(A))$;
\item[(iii)] if $x \in V(A)$ and $y \in \mathrm{SAff}(\mathrm{T}(A))$,
then their sum is the usual (pointwise) sum of $\hat{x}$ and $y$ in
$\mathrm{SAff}(\mathrm{T}(A))$, where $\hat{x}(\tau) = \tau(x)$,
$\forall \tau \in \mathrm{T}(A)$.
\end{enumerate}
\vspace{2mm}
Equip $V(A) \sqcup \mathrm{SAff}(\mathrm{T}(A))$ with the partial order $\leq$ which restricts to the
usual partial order on each of $V(A)$ and $\mathrm{SAff}(\mathrm{T}(A))$,
and which satisfies the following conditions for $x \in V(A)$ and $y \in
\mathrm{SAff}(\mathrm{T}(A))$:
\vspace{2mm}
\begin{enumerate}
\item[(i)] $x \leq y$ if and only if $\hat{x}(\tau) < y(\tau)$, $\forall \tau \in \mathrm{T}(A)$;
\item[(ii)] $y \leq x$ if and only if $y(\tau) \leq \hat{x}(\tau)$, $\forall \tau \in \mathrm{T}(A)$.
\end{enumerate}
\vspace{2mm}
  
It is shown in \cite{bt} that the Cuntz semigroup of $A$ is order isomorphic to the ordered Abelian semigroup $V(A) \sqcup
\mathrm{SAff}(\mathrm{T}(A))$ defined above whenever $\iota(A \otimes \mathcal{K}) = \mathrm{SAff}(\mathrm{T}(A))$.  This structure theorem therefore applies to the algebras covered by Theorem \ref{main} provided that they have strict comparison.

\subsection{Two conjectures of Blackadar-Handelman}  In their 1982 study of dimension functions on unital tracial C$^*$-algebras---equivalently, additive, unital, and order preserving maps from the Cuntz semigroup into $\mathbb{R}^+$---Blackadar and Handelman made two conjectures (\cite{bh}):
\begin{enumerate}
\item[(i)] The space of lower semicontinuous dimension functions---dimension functions of the form $d_\tau$ for a normalized 2-quasitrace $\tau$---is weakly dense among all dimension functions.
\item[(ii)] The affine space of all dimension functions is a Choquet simplex.
\end{enumerate} 
It was proved in \cite{BPT} that these conjectures hold for a unital simple separable exact C$^*$-algebra whose Cuntz semigroup has the form described in Subsection \ref{Cuntzstructure}, and so the conjectures hold for the algebras covered by Theorem \ref{main} provided that they are exact and have strict comparison.  

\subsection{$\mathcal{Z}$-stability and a stably finite Geneva Theorem}  
At the ICM Satellite Meeting on Operator Algebras in 1994, Kirchberg announced that a simple separable nuclear C$^*$-algebra is purely infinite if and only if it absorbs the Cuntz algebra $\mathcal{O}_{\infty}$ tensorially.  This result is a cornerstone of the theory of purely infinite C$^*$-algebras and their classification. Using results of R\o rdam from \cite{R4} and the definition of strict comparison, one can rephrase Kirchberg's result:
\begin{thms}\label{zstabstrict}
Let $A$ be a simple separable nuclear traceless C$^*$-algebra.  It follows that
\[
A \cong A \otimes \mathcal{Z} \Leftrightarrow A \mathrm{ \ has \ strict \ comparison}.
\]
\end{thms}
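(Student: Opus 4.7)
The plan is to identify strict comparison in the traceless setting with pure infiniteness, and then read the theorem as a direct reformulation of Kirchberg's classical result. First I would observe that if $\mathrm{T}(A)=\emptyset$ then the defining implication of Definition \ref{strictcomp} becomes vacuous on the hypothesis side: for every positive $b \in A \otimes \mathcal{K}$, the set $\{\gamma \in \mathrm{T}(A) : d_\gamma(b) < \infty\}$ is empty, so strict comparison reduces to the blanket assertion that $a \precsim b$ for every pair of positive $a,b \in A \otimes \mathcal{K}$. By R\o rdam's Cuntz-semigroup characterization of purely infinite simple C$^*$-algebras in \cite{R4}, this is exactly pure infiniteness of the simple algebra $A$.

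With this dictionary in place, the forward implication ``strict comparison $\Rightarrow \mathcal{Z}$-stability'' becomes Kirchberg's theorem that a simple separable nuclear purely infinite C$^*$-algebra is $\mathcal{O}_\infty$-absorbing, combined with the fact that $\mathcal{O}_\infty \otimes \mathcal{Z} \cong \mathcal{O}_\infty$ (as $\mathcal{O}_\infty$ is strongly self-absorbing and absorbs $\mathcal{Z}$). Indeed, Kirchberg gives $A \otimes \mathcal{O}_\infty \cong A$, whence
\[
A \otimes \mathcal{Z} \cong A \otimes \mathcal{O}_\infty \otimes \mathcal{Z} \cong A \otimes \mathcal{O}_\infty \cong A.
\]

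For the converse, I would assume $A \cong A \otimes \mathcal{Z}$ and invoke R\o rdam's dichotomy from \cite{R4}: a simple $\mathcal{Z}$-stable C$^*$-algebra is either stably finite or purely infinite. Nuclearity plus tracelessness excludes the stably finite alternative, since by Blackadar--Handelman a stably finite simple unital C$^*$-algebra supports a nontrivial $2$-quasitrace, and by Haagerup's theorem every quasitrace on an exact (in particular nuclear) C$^*$-algebra is a trace, contradicting $\mathrm{T}(A)=\emptyset$. Therefore $A$ is purely infinite, and by the first step this is strict comparison.

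The only real work here is the vocabulary translation in the first step; once strict comparison is recognized as pure infiniteness in the traceless world, the theorem is a straightforward repackaging of Kirchberg's $\mathcal{O}_\infty$-absorption theorem together with R\o rdam's dichotomy. The mild nuance to handle is the non-unital case for stable finiteness, where one passes to a unit-containing corner of a nonzero hereditary subalgebra to apply Blackadar--Handelman--Haagerup.
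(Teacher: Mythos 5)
Your argument is correct and is essentially the paper's intended one: the paper offers no separate proof of Theorem \ref{zstabstrict}, presenting it precisely as a rephrasing of Kirchberg's $\mathcal{O}_\infty$-absorption theorem via the observation that, for traceless simple algebras, strict comparison (Definition \ref{strictcomp}) amounts to pure infiniteness, combined with the results of R{\o}rdam in \cite{R4}. Your dictionary between strict comparison and pure infiniteness, Kirchberg's theorem for the forward direction, and R{\o}rdam's dichotomy together with Blackadar--Handelman--Haagerup for the converse are exactly the ingredients the paper is invoking.
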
 
\noindent
Winter and the second named author have conjectured that Theorem \ref{zstabstrict} continues to hold in the absence of the "traceless" hypothesis, giving a stably finite version of the Geneva Theorem.  As in the purely infinite case, confirmations of this conjecture lead to strong classification results for simple C$^*$-algebras.  Indeed, we shall give such an application in Subsection \ref{class} below.  An important step toward the solution of this conjecture has recently been taken by Winter.
 \begin{thms}[Winter, \cite{Wi4}]\label{locfin}
 Let $A$ be a unital simple separable C$^*$-algebra with locally finite decomposition rank.  If $\mathrm{Cu}(A) \cong \mathrm{Cu}(A \otimes \mathcal{Z})$, then $A \cong A \otimes \mathcal{Z}$.
 \end{thms}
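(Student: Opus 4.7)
The plan is to deduce $\mathcal{Z}$-stability from a combination of structural data extracted from the Cuntz-semigroup isomorphism and Winter's machinery of finite decomposition rank. The overall scheme is: (1) the hypothesis $\mathrm{Cu}(A) \cong \mathrm{Cu}(A \otimes \mathcal{Z})$ transfers enough regularity from $A \otimes \mathcal{Z}$ to $A$ so that Theorem \ref{embedcriterion} produces unital $*$-homomorphisms from every prime dimension drop algebra $\mathrm{I}_{m,m+1}$ into $A$; (2) local finiteness of decomposition rank is then used to perturb such a $*$-homomorphism into an approximately central one, which by the Toms-Winter criterion recalled at the beginning of Section \ref{ipq} is enough to conclude $A \otimes \mathcal{Z} \cong A$.

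For step (1), note that $A \otimes \mathcal{Z}$ is automatically $\mathcal{Z}$-stable, hence by R\o rdam has strict comparison of positive elements; and by the result of \cite{PT} quoted after Theorem \ref{main} it also satisfies the almost-divisibility condition (ii). Both properties are functorial invariants of $\mathrm{Cu}(\cdot)$ viewed together with the induced dimension functions $d_\tau$, so under the assumed isomorphism they pass to $A$ itself. Theorem \ref{embedcriterion} then provides, for every $m \geq 2$, a unital $*$-homomorphism $\phi_m:\mathrm{I}_{m,m+1} \to A$.

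For step (2), I would fix one $m$ and try to build an approximately central sequence $(\phi_m^{(n)})$ of unital $*$-homomorphisms $\mathrm{I}_{m,m+1} \to A$ out of the single map $\phi_m$. Given a finite $F \subset A$ and $\varepsilon>0$, local finiteness of decomposition rank supplies a $\mathrm{C}^*$-subalgebra $B \subset A$ of finite decomposition rank $d$ that contains $F$ to within $\varepsilon$. The defining completely positive approximation property of $B$ yields a factorisation of $\mathrm{id}_B$ through a finite-dimensional algebra by a sum of at most $d+1$ order-zero maps. Using these order-zero components one can construct, inside $A$, a $d+1$-coloured family of mutually orthogonal positive contractions that essentially commute with $F$ and carry enough room (by the already established almost-divisibility of $\mathrm{Cu}(A)$) to accommodate a copy of $\mathrm{I}_{m,m+1}$; composing $\phi_m$ with a conjugation/compression assembled from these pieces produces a unital $*$-homomorphism $\psi: \mathrm{I}_{m,m+1} \to A$ whose image $\varepsilon$-commutes with $F$.

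The hardest step will be the commutant localisation in the previous paragraph: dimension drop algebras are non-projective and non-semiprojective in a useful way, so one cannot simply lift $\phi_m$ to a corner disjoint from $F$; one has to use the bounded-colour order-zero decomposition to produce a nearby $*$-homomorphism whose image almost commutes with $F$, with norm estimates depending only on $d$ and $m$ rather than on $F$. Granted such a local perturbation for arbitrary $F$ and $\varepsilon$, a standard diagonal/reindexing argument produces the required approximately central sequence, and the Toms-Winter criterion then delivers $A \cong A \otimes \mathcal{Z}$.
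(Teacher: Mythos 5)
This theorem is not proved in the paper at all: it is Winter's result, imported as a black box from \cite{Wi4} (listed as ``in preparation''), so there is no in-paper argument to compare yours against. Judged on its own terms, your proposal has a genuine gap precisely where the content of the theorem lies. Step (1) is broadly reasonable: strict comparison (equivalently, almost unperforation of the Cuntz semigroup) and the divisibility condition $mx \leq \langle 1_A\rangle \leq (m+1)x$ are order-theoretic properties of $\mathrm{Cu}(\cdot)$, so they do transfer across the isomorphism --- though you should check that the isomorphism can be arranged to preserve the class of the unit, and you need $\mathrm{T}(A)\neq\emptyset$ for Theorem \ref{embedcriterion} to apply (the traceless case requires a separate argument in the spirit of Theorem \ref{zstabstrict}).

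Step (2), however, is not an argument but a restatement of what must be proved. Passing from a single unital $*$-homomorphism $\mathrm{I}_{m,m+1}\to A$ to an approximately central sequence of them is the entire difficulty, and the mechanism you propose does not work as described: compressing or conjugating $\phi_m$ by almost-central positive contractions extracted from an order-zero decomposition destroys multiplicativity, so it does not yield a $*$-homomorphism, let alone a unital one, and you explicitly concede that the ``commutant localisation'' with estimates depending only on $d$ and $m$ is the step you cannot supply --- but that concession is the theorem. Winter's actual proof does not perturb a global dimension drop embedding at all: it works in the central sequence algebra $A_\omega\cap A'$, using the coloured completely positive approximations furnished by (locally) finite decomposition rank together with the comparison and divisibility data in $\mathrm{Cu}(A)$ to construct order-zero maps from matrix algebras into $A_\omega\cap A'$ whose defect $1-\phi(1)$ is controlled in the Cuntz sense, and then invokes the R{\o}rdam--Winter analysis of $\mathcal{Z}$ and of dimension drop algebras (\cite{RorWin:JiangSu}) to convert these into unital embeddings of $\mathrm{I}_{m,m+1}$ into the central sequence algebra. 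As written, your argument establishes only that $A$ contains some unital copy of each prime dimension drop algebra, which is strictly weaker than $\mathcal{Z}$-stability.
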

\noindent
If $A$ is a unital simple exact C$^*$-algebra with nonempty tracial state space, then the statement "The Cuntz semigroup of $A$ has the form described in Subsection \ref{Cuntzstructure}" can be neatly summarized by saying that
$\mathrm{Cu}(A) \cong \mathrm{Cu}(A \otimes \mathcal{Z})$.   We therefore have:
\begin{cors}\label{conjconfirm}
Let $A$ be a unital simple separable C$^*$-algebra with locally finite decomposition rank and nonempty tracial state space.  Suppose that $A$ satisfies any of conditions (i)-(iii) in Theorem \ref{main}.  It follows that
\[
A \cong A \otimes \mathcal{Z} \Leftrightarrow A \mathrm{ \ has \ strict \ comparison}.
\]
\end{cors}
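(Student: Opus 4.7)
The plan is to establish the two implications separately, with the reverse implication being the substantive one.

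The forward direction is immediate: if $A \cong A \otimes \mathcal{Z}$, then $A$ is $\mathcal{Z}$-stable, and every $\mathcal{Z}$-stable C$^*$-algebra has strict comparison of positive elements by a result of R\o rdam \cite{R4}.

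For the reverse direction, suppose $A$ has strict comparison. The strategy is to verify the hypothesis of Winter's Theorem \ref{locfin}, namely $\mathrm{Cu}(A) \cong \mathrm{Cu}(A \otimes \mathcal{Z})$, and then invoke that theorem. First I would apply Theorem \ref{main} to $A$: strict comparison together with any one of conditions (i)-(iii) gives that the range of $\iota$ is all of $\mathrm{SAff}(\mathrm{T}(A))$. By the structure theorem recalled in Subsection \ref{Cuntzstructure} (due to \cite{bt}), this surjectivity of $\iota$ forces $\mathrm{Cu}(A)$ to be order-isomorphic to $V(A) \sqcup \mathrm{SAff}(\mathrm{T}(A))$.

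To convert this structural description into the desired Cuntz semigroup isomorphism, I would appeal to the observation stated in the paragraph immediately preceding the corollary: for a unital simple exact C$^*$-algebra with nonempty tracial state space, having $\mathrm{Cu}(A)$ of the form $V(A) \sqcup \mathrm{SAff}(\mathrm{T}(A))$ is equivalent to $\mathrm{Cu}(A) \cong \mathrm{Cu}(A \otimes \mathcal{Z})$. Since locally finite decomposition rank entails nuclearity, and hence exactness, this observation applies to $A$, yielding $\mathrm{Cu}(A) \cong \mathrm{Cu}(A \otimes \mathcal{Z})$. Theorem \ref{locfin} then delivers $A \cong A \otimes \mathcal{Z}$.

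There is no real obstacle at this stage: all the hard work has been absorbed into Theorem \ref{main} and into Winter's theorem. The present corollary is essentially a formal concatenation of (a) the surjectivity of $\iota$ provided by Theorem \ref{main}, (b) the structure theorem of \cite{bt} identifying $\mathrm{Cu}(A)$ with $V(A) \sqcup \mathrm{SAff}(\mathrm{T}(A))$, and (c) Winter's regularity result. If there were a delicate point worth double-checking, it would be that locally finite decomposition rank does propagate nuclearity (hence exactness) of $A$, so that the exact-algebra characterization of the above special form of $\mathrm{Cu}(A)$ as $\mathrm{Cu}(A) \cong \mathrm{Cu}(A \otimes \mathcal{Z})$ is in force.
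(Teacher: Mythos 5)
Your proposal is correct and follows exactly the route the paper intends: the forward implication via R\o rdam's result that $\mathcal{Z}$-stability gives strict comparison, and the reverse implication by combining Theorem \ref{main} with the structure theorem of \cite{bt} to get $\mathrm{Cu}(A) \cong \mathrm{Cu}(A \otimes \mathcal{Z})$ and then invoking Winter's Theorem \ref{locfin}. Your side remark that locally finite decomposition rank yields nuclearity (hence exactness, so the identification of the special form of $\mathrm{Cu}(A)$ with $\mathrm{Cu}(A\otimes\mathcal{Z})$ applies) is a point the paper leaves implicit but is correctly handled.
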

\noindent
This represents a substantial confirmation of the conjecture described above.  We comment that locally finite decomposition rank is quite a weak property, satisfied, for instance, by any unital separable ASH algebra (\cite{NW}).  

\subsection{Classification of C$^*$-algebras}\label{class}
Winter and, Lin and Niu, have proved strong classification theorems under the assumption of $\mathcal{Z}$-stability (\cite{Wi2}, \cite{Wi3}, \cite{lin-niu}).  In light of these, we have the following confirmation of Elliott's classification conjecture.
\begin{cors}
Let $\mathcal{C}$ denote the class of C$^*$-algebras which satisfy all of the conditions of \ref{conjconfirm} and the UCT and have enough projections to separate their traces.  It follows that Elliott's conjecture holds for $\mathcal{C}$:  if $A,B \in \mathcal{C}$ and 
 \[
 \phi:\mathrm{K}_*(A) \to \mathrm{K}_*(B)
 \]
 is a graded order isomorphism with $\phi_*[1_A]=[1_B]$, then there is a $*$-isomorphism $\Phi:A \to B$ which induces $\phi$.
\end{cors}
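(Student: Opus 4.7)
The plan is to reduce everything to a known classification theorem for $\mathcal{Z}$-stable C$^*$-algebras. First, since $A,B \in \mathcal{C}$ have locally finite decomposition rank, nonempty tracial state space, strict comparison, and satisfy one of conditions (i)--(iii) of Theorem \ref{main}, Corollary \ref{conjconfirm} applies directly and yields $A \cong A \otimes \mathcal{Z}$ and $B \cong B \otimes \mathcal{Z}$. Thus both algebras land in the class of unital simple separable $\mathcal{Z}$-stable C$^*$-algebras with locally finite decomposition rank, satisfying the UCT, and having projections that separate traces.

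The next step is to invoke the classification results of Winter and of Lin--Niu (\cite{Wi2}, \cite{Wi3}, \cite{lin-niu}), which classify unital simple separable $\mathcal{Z}$-stable C$^*$-algebras with locally finite decomposition rank (or the relevant regularity hypothesis used in those papers) satisfying the UCT via their Elliott invariant. The Elliott invariant in full generality consists of the graded ordered $\mathrm{K}$-theory with unit together with the tracial state space and its pairing with $\mathrm{K}_0$. The hypothesis that projections in $A$ and $B$ separate traces is precisely what allows us to drop the tracial data from the list of invariants: any tracial state is determined by its values on $\mathrm{K}_0$, and any affine, unit-preserving homeomorphism of trace spaces compatible with the order on $\mathrm{K}_0$ is automatically induced by a graded ordered $\mathrm{K}$-theory isomorphism sending $[1_A]$ to $[1_B]$.

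So the recipe is: given the graded ordered isomorphism $\phi:\mathrm{K}_*(A) \to \mathrm{K}_*(B)$ with $\phi_*[1_A] = [1_B]$, one first uses the projection-separating condition to produce from $\phi$ a full isomorphism of Elliott invariants (including the trace-space component and its pairing with $\mathrm{K}_0$), and then invokes the Winter / Lin--Niu classification to lift this to a $*$-isomorphism $\Phi: A \to B$ which by construction induces $\phi$ on $\mathrm{K}$-theory.

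The only delicate point is verifying that the Winter / Lin--Niu theorems actually apply under our specific regularity hypothesis (locally finite decomposition rank plus $\mathcal{Z}$-stability plus UCT), and that the projection-separation condition really suffices to recover the tracial part of the Elliott invariant from the graded ordered $\mathrm{K}$-theory; both are by now standard but must be cited carefully. Granting these, the corollary follows immediately from Corollary \ref{conjconfirm} combined with the cited classification theorems.
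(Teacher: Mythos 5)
Your proposal is correct and follows essentially the same route as the paper, which states the corollary without a formal proof beyond the remark that it follows from Corollary \ref{conjconfirm} (giving $\mathcal{Z}$-stability) combined with the classification theorems of Winter and Lin--Niu for $\mathcal{Z}$-stable algebras, with the projection-separation hypothesis serving exactly the role you identify of recovering the tracial part of the Elliott invariant from the graded ordered $\mathrm{K}$-theory.
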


\subsection{Classification of Hilbert modules}  
Let $A$ be a unital separable C$^*$-algebra with nonempty tracial state space, and let $E$ be a countably generated Hilbert module over $A$.  It follows from \cite{Ped:factor} that $E \cong \overline{a(A \otimes \mathcal{K})}$ for some positive $a \in A \otimes \mathcal{K}$.  If $A$ has stable rank one, then the isomorphism class of $E$ depends only on the Cuntz equivalence class of $a$, and we may define $d_\tau(E) = d_\tau(\tilde{a})$ for any $\tilde{a}$ is this equivalence class.  By Kasparov's stabilization theorem, there is a projection $P_E \in B(H_A)$ such that 
$E$ is isomorphic to $P_E H_A$. (Here $H_A = \ell^2 \otimes A$ 
is the standard Hilbert module over $A$.) 
Corollary \ref{conjconfirm} yields $\mathcal{Z}$-stability for an algebra $A$ as in Theorem \ref{main} provided that it has strict comparison and locally finite decomposition rank.  Appealing to \cite{R4}, this gives stable rank one, and so a further appeal to \cite{bt} gives the following classification result.
\begin{cors}
Let $A$ be a unital simple separable C$^*$-algebra satisfying all of the conditions of Corollary \ref{conjconfirm}.  Given two countably generated Hilbert modules $E$, $F$ over $A$, the following are equivalent: 
\begin{enumerate} 
\item[(i)] $E$ is isomorphic to $F$; 

\item[(ii)] Either $\langle P_E\rangle = \langle P_F\rangle \in \mathrm{V}(A)$ (in the case 
$P_E, P_F \in A\otimes \mathcal{K}$), or $d_\tau(E) = d_\tau(F), \ \forall \tau \in \mathrm{T}(A)$. 
\end{enumerate} 
In particular, if neither $E$ nor $F$ is a finitely generated projective module, then $E \cong F$ 
if and only if $d_\tau(E) = d_\tau(F), \ \forall \tau \in \mathrm{T}(A)$.

\end{cors}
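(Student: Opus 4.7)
The plan is to assemble this classification by combining three ingredients already in hand: $\mathcal{Z}$-stability (hence stable rank one) from Corollary \ref{conjconfirm}, the structure theorem for $\mathrm{Cu}(A)$ recalled in Subsection \ref{Cuntzstructure}, and the theorem of Coward--Elliott--Ivanescu identifying Hilbert modules with Cuntz classes under stable rank one. The strategy is to translate both sides of the equivalence into statements about $\mathrm{Cu}(A)$ and match them via the decomposition $\mathrm{Cu}(A) \cong V(A) \sqcup \mathrm{SAff}(\mathrm{T}(A))$.

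First I would extract the structural consequences of the hypotheses. Corollary \ref{conjconfirm} applies, so $A \cong A \otimes \mathcal{Z}$; by \cite{R4} this forces $A$ to have stable rank one. Theorem \ref{main} guarantees $\iota((A\otimes\mathcal{K})_+) = \mathrm{SAff}(\mathrm{T}(A))$, which is precisely the hypothesis under which the structural isomorphism $\mathrm{Cu}(A) \cong V(A) \sqcup \mathrm{SAff}(\mathrm{T}(A))$ of \cite{bt} holds. Next, the Pedersen result \cite{Ped:factor} quoted in the excerpt writes every countably generated Hilbert module $E$ as $\overline{a(A\otimes\mathcal{K})}$ for some positive $a$, and under stable rank one the Coward--Elliott--Ivanescu theorem yields $\overline{a(A\otimes\mathcal{K})} \cong \overline{b(A\otimes\mathcal{K})}$ if and only if $\langle a\rangle = \langle b\rangle$ in $\mathrm{Cu}(A)$. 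Thus isomorphism of $E$ and $F$ is equivalent to equality of their Cuntz classes $\langle a\rangle$ and $\langle b\rangle$.

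It then remains to match the dichotomy in condition (ii) with the two summands of $V(A) \sqcup \mathrm{SAff}(\mathrm{T}(A))$. A Cuntz class $\langle a\rangle$ lies in the $V(A)$ summand exactly when $a$ is Cuntz equivalent to a projection, which happens exactly when $E = \overline{a(A\otimes\mathcal{K})}$ is a finitely generated projective module and $P_E \in A\otimes\mathcal{K}$; equality of two such classes in $V(A)$ is Murray--von Neumann equivalence of the representing projections, yielding the first clause of (ii). Otherwise $\langle a\rangle$ lies in $\mathrm{SAff}(\mathrm{T}(A))$, where it is tagged precisely by the rank function $\tau \mapsto d_\tau(a) = d_\tau(E)$, yielding the second clause. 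Because the two summands are disjoint in the Cuntz semigroup, a mixed case (one of $E,F$ finitely generated projective and the other not) cannot give isomorphic modules, so no further bookkeeping is needed; this also explains why the final sentence of the corollary holds verbatim.

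There is no serious technical obstacle: the work is entirely in correctly invoking results from the excerpt and the cited literature. The only subtle point is the identification of the $V(A)$ summand with classes of projections and of the $\mathrm{SAff}$ summand with the non-projective classes, which is built into the structure theorem of \cite{bt}; once this is in hand, the equivalence (i) $\Leftrightarrow$ (ii) is a direct transcription of equality in $V(A) \sqcup \mathrm{SAff}(\mathrm{T}(A))$.
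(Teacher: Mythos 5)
Your proposal is correct and follows essentially the same route as the paper: the paper's own argument is precisely the chain Pedersen's factorization $\to$ stable rank one via $\mathcal{Z}$-stability (Corollary \ref{conjconfirm} plus \cite{R4}) $\to$ identification of isomorphism classes of countably generated Hilbert modules with Cuntz classes $\to$ the structure theorem $\mathrm{Cu}(A)\cong V(A)\sqcup\mathrm{SAff}(\mathrm{T}(A))$ from \cite{bt}, which you have simply unpacked where the paper cites \cite{bt} as a black box. The only point worth noting is that the hypotheses must be read as including strict comparison (as the paper's preceding paragraph makes explicit) so that Corollary \ref{conjconfirm} actually delivers $A\cong A\otimes\mathcal{Z}$; you implicitly assume this, consistent with the paper's intent.
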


\end{document}